\newcommand\Z{{\mathbb Z}}
\newtheorem{theorem}{Theorem}[section]
\newtheorem{lemma}[theorem]{Lemma}
\newtheorem{corollary}[theorem]{Corollary}
\theoremstyle{definition}
\newtheorem{remark}[theorem]{Remark}
\numberwithin{equation}{section}
\newcommand\Res{\mathrm{Res}}
\begin{document}

\title[Zsigmondy's theorem and primitive divisors]{Zsigmondy's theorem and primitive divisors of the Lucas and Lehmer sequences in polynomial rings}

\author{Min Sha}
\address{School of Mathematical Sciences, South China Normal University, Guangzhou, 510631, China}
\email{shamin@scnu.edu.cn}

\subjclass[2010]{11A41, 11B39, 11C08}

\keywords{Primitive divisor, Zsigmondy's theorem, Lucas sequence, Lehmer sequence, polynomial ring}

\begin{abstract}
In this paper, we obtain analogues of Zsigmondy's theorem and the primitive divisor results for the Lucas and Lehmer sequences 
in polynomial rings of several variables. 
\end{abstract}

\maketitle

\section{Introduction}   \label{sec:int}

\subsection{Background and motivation}
Given a sequence $(a_n)_{n \ge 1}$ of the rational integers $\Z$, a prime divisor of a term $a_n$ is called \textit{primitive} if it divides no earlier term. 
The sequence is a \textit{divisibility sequence} if $a_m \mid a_n$ whenever $m \mid n$, 
and it is a \textit{strong divisibility sequence} if $\gcd(a_m, a_n) = a_d$ with $d = \gcd(m,n)$ for any positive integers $m, n$. 
These notions apply to any sequence of a unique factorization domain. 

It is a classical and still very active topic in number theory to study primitive prime divisors of an integer sequence. 
The classical Zsigmondy theorem \cite{Zsig} in 1892, extending earlier work of Bang \cite{Bang} in the case $b=1$, says that every term beyond the sixth in the sequence 
$(a^n - b^n)_{n \ge 1}$ has a primitive prime divisor, where $a,b$ are  positive coprime integers. 
This theorem was independently rediscovered by Birkhoff and Vandiver \cite{BV}. 
Results of this form are often useful in group theory and in the theory of recurrence sequences (see \cite[Section 6.3]{EPSW} for a discussion and references).

In 1913, Carmichael \cite{Car} showed that each term of the Lucas sequence $((a^n - b^n)/(a-b))_{n \ge 1}$ beyond the twelfth has a primitive prime divisor, 
where $a, b$ are real algebraic integers such that $a/b$ is not a root of unity, and $a+b$ and $ab$ are coprime integers in $\Z$. 
In 1955, Ward \cite{Ward} obtained a similar result for the Lehmer sequence $(s_n)_{n \ge 1}$ with $s_n = (a^n - b^n)/(a-b)$ for odd $n$ 
and $s_n = (a^n - b^n)/(a^2-b^2)$ for even $n$, where $a,b$ are real, and $(a+b)^2$ and $ab$ are coprime integers in $\Z$.
All these results, including Zsigmondy's theorem, were extended to any number field (that is, $a, b$ do not need to be real) by Schinzel \cite{Sch}  
in an effective but not explicit manner (see \cite{PS} for an earlier work), 
which was first made explicitly by Stewart \cite{Stewart}. 
Furthermore, in 2001, Bilu, Hanrot and Voutier \cite{BHV} listed all the Lucas and Lehmer numbers without primitive prime divisor. 

So far, the above classical results have various extensions in different settings. 
For example, the extensions to elliptic divisibility sequence \cite{EMW,Sil},  to dynamical sequences \cite{IS,Rice}, 
to function fields defined over number fields \cite{IMSSS}, to Drinfeld modules \cite{Bam,Quan,ZJ}. 
Recently, Flatters and Ward \cite{FW} found an analogue of Zsigmondy's theorem for a polynomial sequence $(f^n - g^n)_{n \ge 1}$, 
where $f,g$ are two coprime polynomials in a polynomial ring $K[X]$ ($K$ is a field). 

In this paper, we want to establish analogues of Zsigmondy's theorem and the primitive divisor results for the Lucas and Lehmer sequences 
in polynomial rings of several variables. 
The approach is essentially the same as in \cite{FW}. 
It in fact follows the classical one with some modifications needed to avoid terms in the sequence where the Frobenius automorphism precludes primitive divisors. 
However, for analogues of polynomial Lucas and Lehmer sequences, it indeed needs some more considerations. 

Throughout the paper, let $K$ be a field, and $R = K[X_1, \ldots, X_r]$ the ring of polynomials in varibales $X_1, \ldots, X_r$. 
Let $p$ be the characteristic of $K$. Note that $R$ is a unique factorization domain. 
Besides, a \textit{prime divisor} of a polynomial $h$ in $R$ means a monic irreducible polynomial in $R$ dividing $h$. 

We state the main results in the rest of Section~\ref{sec:int}, and then prove them later on.

\subsection{Main results} 

Let $\lambda, \eta$ be non-zero algebraic elements over $R$ such that $\lambda / \eta$ is not a root of unity. 
Assume that $(\lambda + \eta)^2$ and $\lambda\eta$ are non-zero coprime polynomials in $R$ which are not both in $K$. 
Define the \textit{Lehmer sequence} of $R$: 
\begin{equation*}
U_n = 
\left\{\begin{array}{ll}
\frac{\lambda^n - \eta^n}{\lambda - \eta} & \textrm{if $n$ is odd,}\\
\\
\frac{\lambda^n - \eta^n}{\lambda^2 - \eta^2} & \textrm{if $n$ is even.}\end{array}\right.
\end{equation*}
We remark that the Lehmer sequence $(U_n)_{n \ge 1}$ satisfies the following recurrence relation over $R$: 
$$
U_{n+4} = (\lambda^2 + \eta^2) U_{n+2} - \lambda^2\eta^2 U_n, \quad n = 1, 2, \ldots. 
$$

The following two theorems are about the strong divisibility property and the primitive prime divisors of the sequence $(U_n)_{n \ge 1}$, respectively. 

\begin{theorem}  \label{thm:strong3}
The sequence $(U_n)_{n \ge 1}$ is a strong divisibility sequence. 
\end{theorem}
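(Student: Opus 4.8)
The plan is to prove the full identity $\gcd(U_m,U_n) = U_{\gcd(m,n)}$ (the divisibility-sequence property is the special case $m \mid n$) by transferring it from the auxiliary sequence $L_n := \lambda^n - \eta^n$. Since $\lambda,\eta$ are algebraic over $R$ — in fact integral, as $\lambda\eta,\lambda^2+\eta^2 \in R$ — I would work in the module-finite extension domain $S := R[\lambda,\eta]$. The only facts I use about $S$ are standard valuation theory: for each monic irreducible $P \in R$ with associated valuation $v_P$, every extension $w$ of $v_P$ to $\mathrm{Frac}(S)$ is discrete, is non-negative on $S$, and satisfies $w|_{\mathrm{Frac}(R)} = e_w\,v_P$. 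Because $R$ is a UFD, the desired identity between elements of $R$ is equivalent to the family of local statements $\min(w(U_m),w(U_n)) = w(U_{\gcd(m,n)})$, one for each such $w$; working valuation by valuation has the pleasant side effect of making it irrelevant whether $S$ itself is factorial.

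The coprimality hypothesis enters through the remark that for every $w$ as above one has $\min\{w(\lambda),w(\eta)\} = 0$. Indeed, for the prime $P$ below $w$, coprimality of $(\lambda+\eta)^2$ and $\lambda\eta$ in $R$ forces $v_P(\lambda\eta)=0$ or $v_P((\lambda+\eta)^2)=0$; since $\lambda,\eta$, and hence $\lambda+\eta$, are $w$-integral, the first case gives $w(\lambda)=w(\eta)=0$ and the second gives $w(\lambda+\eta)=0$ with $w(\lambda)=0$ or $w(\eta)=0$. Choosing the member of $\{\lambda,\eta\}$ of zero valuation — say $\eta$ — we get $w(\eta^{qn}L_s) = w(L_s)$ for all $q,n,s$. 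Combined with $L_n \mid L_{qn}$ in $S$ and the identity $L_{qn+s} = \eta^{qn}L_s + \lambda^s L_{qn}$, an ultrametric estimate gives $\min(w(L_m),w(L_n)) = \min(w(L_s),w(L_n))$ whenever $m = qn+s$; iterating the Euclidean algorithm on the indices yields $\min(w(L_m),w(L_n)) = w(L_{\gcd(m,n)})$ for all $m,n$. Positive characteristic is harmless: $L_{p^k n} = L_n^{p^k}$ is still divisible by $L_n$.

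For the descent from $(L_n)$ to $(U_n)$, write $U_k = L_k/D_k$ with $D_k = \lambda-\eta$ for odd $k$ and $D_k = (\lambda-\eta)(\lambda+\eta)$ for even $k$, so that $w(U_k) = w(L_k) - w(D_k)$ with $w(D_k)\ge 0$, and $w(\lambda+\eta)=0$ except possibly when $P\mid(\lambda+\eta)^2$ — in which case $w(\lambda)=w(\eta)=0$. Set $d = \gcd(m,n)$. The inequality $\min(w(U_m),w(U_n)) \ge w(U_d)$ is the divisibility-sequence direction: it follows from $L_d \mid L_m$ together with the fact that, when $d$ is odd and $m$ even, $L_m/L_d$ is a sum of an even number of monomials in $\lambda^d,\eta^d$, hence divisible by $\lambda^d+\eta^d$ and so by $\lambda+\eta$; this supplies the factor needed for $U_m/U_d = (L_m/L_d)(D_d/D_m)$ to lie in $S$, and being also in $\mathrm{Frac}(R)$ it lies in $R$. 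The reverse inequality $\min(w(U_m),w(U_n)) \le w(U_d)$ is the substance: using $w(U_k) = w(L_k)-w(D_k)$ and $\min(w(L_m),w(L_n))=w(L_d)$ from the previous paragraph, it comes down to a case analysis on the parities of $m,n,d$ together with whether $w(\lambda+\eta)=0$. Characteristic $2$, where $\lambda^2-\eta^2 = (\lambda-\eta)^2$ so that the even/odd split changes shape, I would record separately, but it only lightens the bookkeeping.

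I expect the main obstacle to be precisely this last step: the parity case analysis transferring strong divisibility from $(L_n)$ to $(U_n)$, and in particular tracking the factor $\lambda+\eta$ — which does not lie in $R$ — and dealing cleanly with characteristic $2$. By contrast, the strong divisibility of $(L_n)$ and the valuation-theoretic reformulation are routine adaptations of the classical Lucas--Lehmer argument and of the method of Flatters and Ward.
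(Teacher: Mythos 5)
Your argument is correct, but it takes a genuinely different route from the paper's. The paper works globally and multiplicatively: it writes $U_m = U_d\,P_{m/d}(\lambda^d,\eta^d)$ (with the extra factor $(\lambda^d+\eta^d)/(\lambda+\eta)$ split off in the mixed-parity case) and proves the cofactors coprime in $R$ by showing the resultants of the relevant homogeneous polynomials are $\pm 1$ (Lemmas~\ref{lem:Res2} and \ref{lem:PmPn2}--\ref{lem:abn}), the passage from ``unit resultant'' to ``coprime in $R$'' being Lemma~\ref{lem:coprime}, which rests on the same integral-closure observation you invoke. You instead argue valuation by valuation, first getting $\min(w(L_m),w(L_n))=w(L_{\gcd(m,n)})$ for $L_n=\lambda^n-\eta^n$ by running the Euclidean algorithm on the indices through $L_{qn+s}=\eta^{qn}L_s+\lambda^sL_{qn}$ and the fact that $\min(w(\lambda),w(\eta))=0$, then correcting by $w(\lambda-\eta)$ and $w(\lambda+\eta)$. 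The parity case analysis you flag as the main risk does close: in the mixed case ($d$ odd, $n$ even) the inequality $\min(w(U_m),w(U_n))\ge w(U_d)$ needs exactly your observation that $\lambda+\eta$ divides $L_n/L_d$ in $R[\lambda,\eta]$, while the reverse inequality is immediate, since whichever of $w(L_m),w(L_n)$ attains $w(L_d)$ forces $w(U_m)=w(U_d)$ or $w(U_n)\le w(U_d)$ after subtracting the nonnegative valuations of the denominators; characteristic $2$ only merges $\lambda-\eta$ with $\lambda+\eta$ and changes nothing material. What the paper's route buys is a stock of explicit coprimality lemmas (notably Lemma~\ref{lem:abn}) that are reused in Lemma~\ref{lem:vU}, Theorem~\ref{thm:strong2} and the primitive-divisor arguments; your route is more self-contained, avoids the cyclotomic resultant computations and the appeal to units in cyclotomic fields, and is closer to the classical Lucas--Lehmer treatment, but those auxiliary facts would have to be re-derived if one wants Remark~\ref{rem:Lehmer} and the later theorems.
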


\begin{theorem}  \label{thm:primitive3}
Suppose the characteristic $p > 0$ and let $U^\prime$ be the sequence obtained from $(U_n)_{n \ge 1}$ by deleting the terms $U_n$ with $p \mid n$, 
then each term of $U^\prime$ beyond the second has a primitive prime divisor. 
If $p = 0$, then each term of $(U_n)_{n \ge 1}$ beyond the second has a primitive prime divisor. 
\end{theorem}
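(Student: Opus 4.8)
The plan is to run the classical cyclotomic argument of Carmichael and Ward in the polynomial form used by Flatters and Ward \cite{FW}, keeping track of the two mechanisms that can destroy primitive divisors: the Frobenius (which is exactly why the terms with $p\mid n$ are discarded) and the possible degeneration of an individual cyclotomic factor to a constant.

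First I would introduce the homogeneous cyclotomic factors. For $n\ge 3$ set $\Phi_n=\prod_{\zeta}(\lambda-\zeta\eta)$, the product over the primitive $n$-th roots of unity $\zeta$ in an algebraic closure of $K$; when $p>0$ this is unambiguous precisely for $p\nmid n$, which is all that is needed. Since $\varphi(n)$ is even for $n\ge 3$, $\Phi_n$ is fixed by $\lambda\leftrightarrow\eta$ and by $(\lambda,\eta)\mapsto(-\lambda,-\eta)$, hence is a polynomial in $(\lambda+\eta)^2$ and $\lambda\eta$, so $\Phi_n\in R$. From $\lambda^n-\eta^n=\prod_{d\mid n}\Phi_d$ with $\Phi_1=\lambda-\eta$ and $\Phi_2=\lambda+\eta$ one gets $U_n=\prod_{d\mid n,\,d\ge 3}\Phi_d$ for $n\ge 3$. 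By Theorem~\ref{thm:strong3}, if a prime $\pi$ of $R$ divides $U_n$ and $U_m$ with $m<n$ then $\pi\mid U_{\gcd(m,n)}$; as $U_1=U_2=1$, a prime divisor of $U_n$ is non-primitive if and only if it divides $\Phi_e$ for some $e\mid n$ with $3\le e<n$. Thus $U_n$ has a primitive prime divisor if and only if $\Phi_n$ has a prime divisor dividing no such $\Phi_e$.

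Next I would carry out the intrinsic versus extrinsic divisor analysis. One checks $\gcd(\Phi_n,\lambda\eta)=1$ for $n\ge 3$ (because $U_n$ is congruent to a power of $(\lambda+\eta)^2$ modulo $\lambda\eta$, which is coprime to $\lambda\eta$) and that a prime dividing $(\lambda-\eta)^2$ can divide $\Phi_n$ only when $n$ is a power of $p$ — impossible for the terms we keep. For the remaining primes $\pi$ there is a well-defined rank of apparition $\rho(\pi)$, the multiplicative order of the reduction of $\lambda/\eta$ modulo a prime above $\pi$, and the usual dichotomy holds: $\pi\mid\Phi_m$ forces $m=\rho(\pi)$, or $p>0$ with $p\mid m$ and $m=\rho(\pi)p^k$. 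Hence a prime dividing both $\Phi_n$ and $\Phi_e$ with $e\mid n$, $e<n$, forces $p>0$ and $n/e$ a positive power of $p$, so $p\mid n$. For the terms of $U^\prime$ and in characteristic $0$ this cannot happen, so $\Phi_n$ is coprime to every $\Phi_e$ with $e\mid n$, $e<n$; combined with the reduction above, \emph{every} prime divisor of $\Phi_n$ is then a primitive prime divisor of $U_n$.

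It remains to prove that $\Phi_n$ actually has a prime divisor, i.e. that $\deg\Phi_n>0$, for $n\ge 3$ with $p\nmid n$ (and for all $n\ge 3$ when $p=0$), and this is the step I expect to be the main obstacle. That $\Phi_n\ne 0$ is clear, since $\lambda/\eta$ is not a root of unity and so no factor $\lambda-\zeta\eta$ vanishes; the difficulty is to exclude that the factors $\lambda-\zeta\eta$ conspire to make $\prod_\zeta(\lambda-\zeta\eta)$ a nonzero constant. Here one must genuinely use that $(\lambda+\eta)^2$ and $\lambda\eta$ are coprime and not both in $K$, together with $p\nmid n$. The natural route is to pick a place $v$ of $R$ — a suitable order valuation, or a specialisation of the variables $X_i$ — at which $\lambda/\eta$ reduces to something that is not a root of unity of order a proper divisor of $n$, write $\Phi_n=\eta^{\varphi(n)}\Phi_n^{\mathrm{cyc}}(\lambda/\eta)$, and read off $\deg\Phi_n$ from the Newton polygon of $\prod_\zeta(\lambda/\eta-\zeta)$ at $v$, using that at most one $\zeta$ can pull $\lambda/\eta-\zeta$ off its generic $v$-value. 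The extra bookkeeping needed here — because of the even/odd split in the definition of $U_n$, and because only $(\lambda+\eta)^2$, not $\lambda+\eta$ itself, is assumed to lie in $R$ — is what distinguishes this argument from its $\lambda^n-\eta^n$ analogue in \cite{FW}, and it is where the precise threshold in the statement gets pinned down.
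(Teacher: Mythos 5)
Your overall architecture is the same as the paper's: factor $U_n=\prod_{d\mid n,\,d\ge 3}\Phi_d(\lambda,\eta)$, use the strong divisibility property (Theorem~\ref{thm:strong3}) to confine any non-primitive prime divisor of $U_n$ to the factors $\Phi_e$ with $e\mid n$, $e<n$, and then argue that such a prime cannot divide $\Phi_n$ when $p\nmid n$. Where the paper computes $v_\pi(\Phi_n(\lambda,\eta))=\sum_{d\mid n}\mu(n/d)v_\pi(U_d)$ and kills the sum using the valuation identity $v_\pi(U_{mk})=v_\pi(U_m)$ for $p\nmid k$ (Lemma~\ref{lem:vU}), you invoke the rank-of-apparition dichotomy; the two are equivalent, but the dichotomy is precisely what Lemma~\ref{lem:vU} exists to prove in this setting (the essential point being that the integer $m$ appearing in the binomial expansion of $\lambda^{mn}=(\eta^n+(\lambda-\eta)U_n)^m$ is a unit of $R$ exactly when $p\nmid m$), so it cannot simply be asserted as ``the usual dichotomy''.

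The genuine gap is the one you flagged yourself: you never prove that $\Phi_n(\lambda,\eta)$ has positive degree, and the Newton-polygon route you sketch is not carried out. This step is indeed the crux, and it is not routine. Since $\varphi(n)$ is even and $\Phi_n(X,Y)$ is symmetric, one can write $\Phi_n(\lambda,\eta)=\prod_{i=1}^{\varphi(n)/2}\bigl(u-\gamma_i w\bigr)$ with $u=(\lambda+\eta)^2$, $w=\lambda\eta$ and distinct constants $\gamma_i=2+\zeta+\zeta^{-1}$ (distinct because $p\nmid n$); since $u,w$ are coprime and not both constant, at most one linear form $u-\gamma_i w$ can lie in $K$, which settles non-degeneracy whenever $\varphi(n)\ge 4$. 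But for $\varphi(n)=2$, i.e.\ $n\in\{3,4,6\}$, there is a single linear form and the hypotheses do not prevent it from being constant: taking $K=\Q$, $R=\Q[X]$, $(\lambda+\eta)^2=X+1$ and $\lambda\eta=X$ gives nonzero coprime data, not both in $K$, with $\lambda/\eta$ not a root of unity, yet $U_3=\Phi_3(\lambda,\eta)=(\lambda+\eta)^2-\lambda\eta=1$. So the non-degeneracy step cannot be completed for $n=3$ under the stated hypotheses, and your proof breaks exactly there; for what it is worth, the paper's own proof disposes of this step with the parenthetical claim that $\Phi_n(\lambda,\eta)$ is non-constant ``because at least one of $\lambda$ and $\eta$ is transcendental over $K$'', which the same example shows to be insufficient. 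To finish, you should prove non-degeneracy via the product of linear forms for $\varphi(n)\ge 4$ and either treat $n\in\{3,4,6\}$ separately (raising the threshold in the statement) or impose an additional hypothesis excluding the degenerate triples.
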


Applying Theorems~\ref{thm:strong3} and \ref{thm:primitive3}, 
we can obtain the strong divisibility property and the primitive divisor result for polynomial Lucas sequences. 

Let $\alpha, \beta$ be non-zero algebraic elements over $R$ such that the quotient $\alpha/\beta$ is not a root of unity. 
Assume that $\alpha + \beta$ and $\alpha \beta$ are coprime polynomials in $R$ which are not both in $K$. 
Define the \textit{Lucas sequence} of $R$: 
$$
L_n = \frac{\alpha^n - \beta^n}{\alpha - \beta}, \quad n =1,2, \ldots. 
$$
We remark that the Lucas sequence $(L_n)_{n \ge 1}$ satisfies the following recurrence relation over $R$: 
$$
L_{n+2} = (\alpha + \beta) L_{n+1} - \alpha\beta L_n, \quad n = 1, 2, \ldots. 
$$

\begin{theorem}  \label{thm:strong2}
The sequence $(L_n)_{n \ge 1}$ is a strong divisibility sequence. 
\end{theorem}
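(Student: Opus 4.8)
The plan is to follow the classical reduction: realize the Lucas sequence as an adjustment of a Lehmer sequence and invoke Theorem~\ref{thm:strong3}. First I would observe that the pair $(\alpha,\beta)$ itself satisfies the hypotheses imposed on a Lehmer pair. Indeed, $\alpha,\beta$ are non-zero algebraic over $R$ and $\alpha/\beta$ is not a root of unity; $\alpha\beta\neq 0$, while $\alpha+\beta=0$ would force $\alpha/\beta=-1$ to be a root of unity, so $(\alpha+\beta)^2\neq 0$; coprimality of $\alpha+\beta$ and $\alpha\beta$ in $R$ gives coprimality of $(\alpha+\beta)^2$ and $\alpha\beta$; and $(\alpha+\beta)^2$ and $\alpha\beta$ are not both in $K$, because a polynomial whose square lies in $K$ already lies in $K$, so this is exactly the Lucas hypothesis on $\alpha+\beta$ and $\alpha\beta$. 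Hence Theorem~\ref{thm:strong3} applies to the Lehmer sequence $(U_n)_{n\ge1}$ attached to $(\lambda,\eta)=(\alpha,\beta)$, so $(U_n)_{n\ge1}$ is a strong divisibility sequence. Straight from the definitions, $U_n=L_n$ when $n$ is odd and $L_n=(\alpha+\beta)U_n$ when $n$ is even, since $(\alpha^2-\beta^2)/(\alpha-\beta)=\alpha+\beta$.

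The second ingredient is the coprimality statement $\gcd(L_m,\alpha+\beta)=1$ for every odd $m$. I would prove this by reducing the recurrence $L_{n+2}=(\alpha+\beta)L_{n+1}-\alpha\beta L_n$ modulo $\alpha+\beta$: with $L_1=1$ this gives $L_{2k+1}\equiv(-\alpha\beta)^k\pmod{\alpha+\beta}$, and since $\alpha\beta$ and $\alpha+\beta$ are coprime in $R$, no prime divisor of $\alpha+\beta$ divides $L_m$ for odd $m$. (The same reduction gives $\alpha+\beta\mid L_n$ for even $n$, consistent with $L_n=(\alpha+\beta)U_n$.)

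Now fix positive integers $m,n$ and set $d=\gcd(m,n)$; all gcd's below are taken in the UFD $R$, i.e. up to units. If $m,n$ are both odd, then $d$ is odd and $\gcd(L_m,L_n)=\gcd(U_m,U_n)=U_d=L_d$ by Theorem~\ref{thm:strong3}. If $m,n$ are both even, then $d$ is even and $\gcd(L_m,L_n)=\gcd((\alpha+\beta)U_m,(\alpha+\beta)U_n)=(\alpha+\beta)\gcd(U_m,U_n)=(\alpha+\beta)U_d=L_d$. In the remaining case, say $m$ odd and $n$ even, $d\mid m$ is odd; using $\gcd(U_m,\alpha+\beta)=\gcd(L_m,\alpha+\beta)=1$ together with the elementary fact that $\gcd(a,bc)=\gcd(a,c)$ whenever $\gcd(a,b)=1$, we obtain $\gcd(L_m,L_n)=\gcd(U_m,(\alpha+\beta)U_n)=\gcd(U_m,U_n)=U_d=L_d$. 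This exhausts all cases, so $(L_n)_{n\ge1}$ is a strong divisibility sequence.

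The only non-formal point — hence the main obstacle — is the coprimality lemma $\gcd(L_m,\alpha+\beta)=1$ for odd $m$; everything else is bookkeeping about parities and the multiplicativity of gcd in a UFD. I would also take a little care that the conventions for $\gcd$ and for $U_n$ with $n$ even are the up-to-units (equivalently, monic) ones, so that $L_n=(\alpha+\beta)U_n$ is an honest identity in $R$ and the displayed gcd manipulations are legitimate, but this causes no real difficulty.
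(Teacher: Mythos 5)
Your proof is correct, and its overall skeleton is the same as the paper's: specialize the Lehmer sequence to $(\lambda,\eta)=(\alpha,\beta)$, note that $L_n=U_n$ for odd $n$ and $L_n=(\alpha+\beta)U_n$ for even $n$, invoke Theorem~\ref{thm:strong3}, and then dispose of the extra factor $\alpha+\beta$ in the mixed-parity case via the coprimality of $L_m$ (for odd $m$) with $\alpha+\beta$. The one place where you genuinely diverge is exactly the step you flag as the main obstacle: the paper obtains $\gcd(L_m,\alpha+\beta)=1$ for odd $m$ by citing Lemma~\ref{lem:abn}, whose proof runs through a resultant computation (that $(X^n-Y^n)/(X-Y)$ and $(X+Y)^2$ have unit resultant) together with the integral-closure argument of Lemma~\ref{lem:coprime}; you instead reduce the recurrence $L_{n+2}=(\alpha+\beta)L_{n+1}-\alpha\beta L_n$ modulo a prime divisor $\pi$ of $\alpha+\beta$ to get $L_{2k+1}\equiv(-\alpha\beta)^k\pmod{\pi}$ and conclude from $\gcd(\alpha+\beta,\alpha\beta)=1$. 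Your route is more elementary and self-contained for this theorem (it needs only that $L_n\in R$, which follows from the recurrence with $L_1=1$, $L_2=\alpha+\beta$); the paper's Lemma~\ref{lem:abn} is the more general statement it needs anyway for the Lehmer-sequence arguments (e.g.\ in Lemma~\ref{lem:vU}), so the author gets this step for free. A further small plus of your write-up is that you explicitly verify that $(\alpha,\beta)$ satisfies the Lehmer hypotheses (non-vanishing and non-constancy of $(\alpha+\beta)^2$, coprimality with $\alpha\beta$), which the paper leaves implicit in the parenthetical ``setting $\lambda=\alpha$, $\eta=\beta$''.
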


\begin{theorem}  \label{thm:primitive2}
Suppose the characteristic $p > 0$ and let $L^\prime$ be the sequence obtained from $(L_n)_{n \ge 1}$ by deleting the terms $L_n$ with $p \mid n$, 
then each term of $L^\prime$ beyond the second has a primitive prime divisor. 
If $p = 0$, then each term of $(L_n)_{n \ge 1}$ beyond the second has a primitive prime divisor. 
\end{theorem}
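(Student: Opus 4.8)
\emph{Proof plan for Theorem~\ref{thm:primitive2}.} The plan is to deduce the statement from the Lehmer results in Theorems~\ref{thm:strong3} and \ref{thm:primitive3}, by viewing $(L_n)_{n\ge 1}$ as, up to a factor of $\alpha+\beta$ on the even-indexed terms, a Lehmer sequence over $R$.

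First I would set $\lambda=\alpha$, $\eta=\beta$ and form the associated Lehmer sequence $(U_n)_{n\ge 1}$, checking that its hypotheses hold: $\lambda/\eta=\alpha/\beta$ is not a root of unity by assumption; $\gcd\big((\alpha+\beta)^2,\alpha\beta\big)=1$ because $\gcd(\alpha+\beta,\alpha\beta)=1$; and $(\alpha+\beta)^2,\alpha\beta$ are not both in $K$ because in $R=K[X_1,\dots,X_r]$ a non-constant polynomial has a non-constant square while $\alpha+\beta\ne 0$ (otherwise $\alpha/\beta=-1$). I then record the elementary identities $L_n=U_n$ for odd $n$ and $L_n=(\alpha+\beta)U_n$ for even $n$; in particular $U_n\mid L_n$ for every $n$, $L_1=U_1=1$, $L_2=\alpha+\beta$, and the filtered sequences $L'$ and $U'$ are indexed by the same set $\{\,n\ge 1:p\nmid n\,\}$, so ``beyond the second term'' refers to the same family of indices for both.

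Next, fix an index $n$ of a term of $L'$ beyond its second (equivalently, of $U'$ beyond its second; when $p=0$ this simply means $n\ge 3$). By Theorem~\ref{thm:primitive3}, $U_n$ has a primitive prime divisor $P$: $P\mid U_n$ but $P\nmid U_m$ for all $m<n$. I claim $P$ is then a primitive prime divisor of $L_n$. Indeed $P\mid U_n\mid L_n$; and for $m<n$ we have $P\nmid L_m$ when $m$ is odd since $L_m=U_m$, while when $m$ is even $L_m=(\alpha+\beta)U_m$ with $P\nmid U_m$, so the only remaining point is $P\nmid\alpha+\beta$. This is the one step that is not a direct transcription of the Lehmer argument — it is precisely the ``extra consideration'' forced by the factor $\alpha+\beta$ on the even terms, and I expect it to be the main obstacle.

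To settle it, suppose $P\mid\alpha+\beta$ and reduce the recurrence $U_{m+4}=(\lambda^2+\eta^2)U_{m+2}-\lambda^2\eta^2U_m$ modulo $P$, using $\lambda^2+\eta^2=(\alpha+\beta)^2-2\alpha\beta\equiv-2\alpha\beta$ and $\lambda^2\eta^2=(\alpha\beta)^2$, together with $U_1=U_2=1$, $U_3=(\alpha+\beta)^2-\alpha\beta\equiv-\alpha\beta$, $U_4=\lambda^2+\eta^2\equiv-2\alpha\beta$. A straightforward induction then yields, modulo $P$, $U_{2k-1}\equiv(-1)^{k-1}(\alpha\beta)^{k-1}$ and $U_{2k}\equiv(-1)^{k-1}k(\alpha\beta)^{k-1}$ for all $k\ge 1$. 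Since $\gcd(\alpha+\beta,\alpha\beta)=1$ gives $P\nmid\alpha\beta$, this shows $P\mid U_m$ only when $m$ is even with $p\mid m/2$ — hence only when $p\mid m$ (and never at all if $p=0$). As $p\nmid n$, we obtain $P\nmid U_n$, contradicting $P\mid U_n$. Therefore $P\nmid\alpha+\beta$, so $P$ is a primitive prime divisor of $L_n$; letting $n$ range over the indices of the terms of $L'$ beyond the second completes the proof, and the $p=0$ case is the same argument with the ``delete multiples of $p$'' clauses deleted. (One could alternatively obtain the odd-$m$ part of $P\nmid\alpha+\beta$ from Theorem~\ref{thm:strong2} via $\gcd(L_2,L_n)=L_{\gcd(2,n)}$, but the recurrence computation disposes of both parities at once.)
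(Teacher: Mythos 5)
Your proof is correct, and while it follows the same overall strategy as the paper (reduce to the Lehmer sequence via $L_n=U_n$ for odd $n$ and $L_n=(\alpha+\beta)U_n$ for even $n$, then show a primitive divisor of $U_n$ cannot divide $\alpha+\beta$), it handles the crucial step by a genuinely different and more self-contained argument. The paper instead invokes the refinement recorded in Remark~\ref{rem:Lehmer} (the primitive part of $U_n$ is exactly $\Phi_n(\alpha,\beta)$) and then proves $\gcd\bigl(\Phi_n(\alpha,\beta),\alpha+\beta\bigr)=1$: for odd $n$ via the resultant computation in Lemma~\ref{lem:abn}, and for even $n$ via a separability argument ($\alpha+\beta$ and $\Phi_n$ sharing a prime factor would force $X^n-Y^n$ to have a multiple root modulo that prime, impossible when $p\nmid n$). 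Your route needs only the bare statement of Theorem~\ref{thm:primitive3}, not the primitive-part refinement, and replaces both coprimality arguments by one closed-form computation of $U_m$ modulo a prime $P\mid\alpha+\beta$, namely $U_{2k-1}\equiv(-1)^{k-1}(\alpha\beta)^{k-1}$ and $U_{2k}\equiv(-1)^{k-1}k(\alpha\beta)^{k-1}$, which I have checked against the recurrence and which (since $P\nmid\alpha\beta$ by coprimality) shows $P$ divides $U_m$ only when $p\mid m$; this is elementary and actually yields slightly more information about the divisors of $\alpha+\beta$ in the Lehmer sequence. Two trivial bookkeeping points you may wish to make explicit: all congruences take place in $R$ because every $U_m$ and both recurrence coefficients $\alpha^2+\beta^2$, $\alpha^2\beta^2$ lie in $R$; and your assertion that the primitive divisor $P$ of the term $U_n$ of $U'$ satisfies $P\nmid U_m$ for \emph{all} $m<n$ (not just those with $p\nmid m$) follows from Theorem~\ref{thm:strong3}, since $P\mid U_m$ would give $P\mid U_{\gcd(m,n)}$ with $p\nmid\gcd(m,n)$ --- though your argument only ever needs the indices $m$ with $p\nmid m$, so nothing is at stake.
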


Theorem~\ref{thm:primitive2} in fact implies an analogue of Zsigmondy's theorem in $R$. 
Let $f, g$ be non-zero coprime polynomials in $R$ such that $f$ and $g$ are not both in $K$ and the quotient $f/g$ is not a root of unity. 
Define the sequence of $R$: 
$$
F_n = f^n - g^n, \quad n =1,2, \ldots. 
$$

\begin{theorem}  \label{thm:strong1}
The sequence $(F_n)_{n \ge 1}$ is a strong divisibility sequence. 
\end{theorem}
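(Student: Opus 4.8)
The plan is to deduce Theorem~\ref{thm:strong1} directly from Theorem~\ref{thm:strong2}, rather than reproving it from scratch. The key observation is that the sequence $(F_n)_{n\ge 1} = (f^n - g^n)_{n\ge 1}$ is, up to a fixed multiplicative factor, a Lucas sequence: if we set $\alpha = f$ and $\beta = g$, then $\alpha + \beta = f+g$ and $\alpha\beta = fg$ are coprime in $R$ (since $f,g$ are coprime, any common prime divisor of $f+g$ and $fg$ would divide one of $f,g$ and hence, via $f+g$, the other), they are not both in $K$ (if $f+g$ and $fg\in K$ then $f,g$ would be roots of a polynomial over $K$, forcing $f,g\in K$, contrary to hypothesis — this needs a small check), and $\alpha/\beta = f/g$ is not a root of unity by assumption. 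Thus $L_n = (f^n-g^n)/(f-g) = F_n/(f-g)$ for all $n$, so $F_n = (f-g)\,L_n$.

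The next step is to transfer the strong divisibility property across this constant factor. Concretely, I would argue that $\gcd(F_m, F_n)$ and $\gcd(L_m, L_n)$ differ only by the factor $f-g$ in a controlled way. Since $F_n = (f-g)L_n$, for any $m,n$ we have $\gcd(F_m,F_n) = \gcd((f-g)L_m,(f-g)L_n)$. In a UFD this is not automatically $(f-g)\gcd(L_m,L_n)$ unless $f-g$ is a unit or one is careful about how $f-g$ interacts with the $L_n$; however, one can compute prime-divisor valuations separately: for a prime divisor $\pi$ of $R$, $v_\pi(F_n) = v_\pi(f-g) + v_\pi(L_n)$, hence $v_\pi(\gcd(F_m,F_n)) = v_\pi(f-g) + \min(v_\pi(L_m),v_\pi(L_n)) = v_\pi(f-g) + v_\pi(\gcd(L_m,L_n)) = v_\pi(F_d)$ by Theorem~\ref{thm:strong2}, where $d = \gcd(m,n)$. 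Since this holds for every prime divisor $\pi$ and all the quantities in question lie in $R$ (up to units; one should track units/leading coefficients, which is the kind of bookkeeping the paper's convention on monic prime divisors handles), we conclude $\gcd(F_m,F_n) = F_d$.

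The main obstacle I anticipate is not conceptual but bookkeeping: $f-g$ need not be a unit, so one must make sure that "$F_n = (f-g)L_n$" together with valuation-wise matching genuinely yields equality of gcd's as elements (or as monic associates), and in particular that no degenerate behavior occurs when $f = g$ (excluded, since then $f/g = 1$ is a root of unity) or when $\deg F_n$ drops unexpectedly. A second, minor point to verify carefully is that the hypotheses on $(f,g)$ really do imply the hypotheses required for $(\alpha,\beta) = (f,g)$ in Theorem~\ref{thm:strong2} — coprimality of $\alpha+\beta$ and $\alpha\beta$, and that they are not both in $K$ — which I would dispatch with the short arguments sketched above. Once these checks are in place, the theorem follows in a few lines, and I would present it as a corollary-style deduction from Theorem~\ref{thm:strong2}.
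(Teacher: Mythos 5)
Your proposal is correct and takes essentially the same route as the paper: the paper's entire proof is the single sentence that Theorem~\ref{thm:strong1} follows directly from Theorem~\ref{thm:strong2}, and your write-up just supplies the routine checks (that $(\alpha,\beta)=(f,g)$ satisfies the hypotheses of Theorem~\ref{thm:strong2}, and the prime-by-prime valuation transfer across the fixed factor $f-g$) that the paper leaves implicit.
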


\begin{theorem}  \label{thm:primitive1}
Suppose the characteristic $p > 0$ and let $F^\prime$ be the sequence obtained from $(F_n)_{n \ge 1}$ by deleting the terms $F_n$ with $p \mid n$, 
then each term of $F^\prime$ beyond the second has a primitive prime divisor. 
If $p = 0$, then each term of $(F_n)_{n \ge 1}$ beyond the second has a primitive prime divisor. 
\end{theorem}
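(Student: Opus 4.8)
The plan is to derive Theorem~\ref{thm:primitive1} from Theorem~\ref{thm:primitive2} by exhibiting $(F_n)_{n\ge 1}$, up to the fixed factor $f-g$, as a polynomial Lucas sequence of $R$.

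\emph{Step 1: set up the Lucas sequence.} I would take $\alpha=f$ and $\beta=g$ and check the hypotheses stated before Theorem~\ref{thm:primitive2}. They are nonzero elements of $R$, hence nonzero and algebraic over $R$, and $\alpha/\beta=f/g$ is not a root of unity by assumption. The polynomials $\alpha+\beta=f+g$ and $\alpha\beta=fg$ are coprime: a monic irreducible $P$ dividing both would, $R$ being a UFD, divide one of $f,g$ and then, via $f+g$, the other, contradicting $\gcd(f,g)=1$. They are not both in $K$, because $fg\in K$ would make $fg$ a unit of $R$, forcing $f,g\in K^{\ast}$ against the hypothesis. So $L_n=(\alpha^n-\beta^n)/(\alpha-\beta)=\sum_{i=0}^{n-1}f^ig^{n-1-i}\in R$ is a Lucas sequence of $R$ in the sense of the paper, and by construction
\[
F_n=f^n-g^n=(f-g)\,L_n \qquad (n\ge 1),
\]
where $f-g\neq 0$ since $f/g\neq 1$.

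\emph{Step 2: transfer primitivity.} The sequences $F'$ and $L'$ are obtained by deleting the same indices (those with $p\mid n$) from $(F_n)$ and $(L_n)$ respectively, so a term $F_n$ of $F'$ beyond the second corresponds to the term $L_n$ of $L'$ beyond the second (and likewise, when $p=0$, a term $F_n$ with $n\ge 3$ corresponds to $L_n$ with $n\ge 3$). For such an index one has $n\ge 3$ and, if $p>0$, $p\nmid n$, and Theorem~\ref{thm:primitive2} supplies a primitive prime divisor $P$ of $L_n$, i.e.\ a monic irreducible $P\in R$ with $P\mid L_n$ and $P\nmid L_m$ for $1\le m<n$. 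Then $P\mid F_n$, and it remains only to check that $P\nmid f-g$; granting this, $P\nmid (f-g)L_m=F_m$ for all $m<n$, so $P$ is a primitive prime divisor of $F_n$. To prove $P\nmid f-g$: reducing $L_n=\sum_{i=0}^{n-1}f^ig^{n-1-i}$ modulo $f-g$ gives $L_n\equiv n\,g^{n-1}\pmod{f-g}$, so if $P\mid f-g$ then $P\mid n\,g^{n-1}$ in $R$; since $\gcd(f,g)=1$ forces $P\nmid g$, $P$ would divide the image of $n$ in $R$, which is impossible because $p\nmid n$ (automatically so when $p=0$) makes that image a nonzero constant.

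I do not anticipate a real obstacle: once $F_n=(f-g)L_n$ is in place, the theorem is essentially a corollary of Theorem~\ref{thm:primitive2}. The one delicate point — which is also the reason the terms with $p\mid n$ must be excluded, as for instance $F_p=(f-g)^p$ in characteristic $p$, whose only prime divisor already appears in $F_1$ — is the verification that a primitive prime divisor of $L_n$ avoids the factor $f-g$, and this is handled by the elementary congruence $L_n\equiv n\,g^{n-1}\pmod{f-g}$ together with the coprimality of $f$ and $g$.
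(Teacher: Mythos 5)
Your proof is correct and follows essentially the same route as the paper: both write $F_n=(f-g)L_n$ and invoke Theorem~\ref{thm:primitive2} with $\alpha=f$, $\beta=g$. The only difference is in the sub-step showing that a primitive prime divisor of $L_n$ cannot divide $f-g$: the paper deduces this from the coprimality of $\Phi_n(f,g)$ and $f-g$ (via separability of $X^n-Y^n$ when $p\nmid n$), while your congruence $L_n\equiv n\,g^{n-1}\pmod{f-g}$ together with $\gcd(f,g)=1$ is an equally valid and arguably more elementary way to reach the same conclusion.
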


For $f, g$ as the above, define the sequence:  
$$
S_n = f^n + g^n, \quad n = 1,2, \ldots. 
$$
If $p \ne 2$, then $(S_n)_{n \ge 1} \ne (F_n)_{n \ge 1}$. 
Note that $F_{2n} = F_n S_n$, which implies that each primitive prime divisor of $F_{2n}$ comes from $S_n$. 
Then, the following corollary is a direct consequence of Theorem~\ref{thm:primitive1}. 

\begin{corollary}  \label{cor:primitive1}
Suppose $p > 0$, and let $S^\prime$ be the sequence obtained from $(S_n)_{n \ge 1}$ by deleting the terms $S_n$ with $p \mid n$, 
then each term of $S^\prime$ beyond the second has a primitive prime divisor. 
If $p = 0$, then each term of $(S_n)_{n \ge 1}$ beyond the second has a primitive prime divisor. 
\end{corollary}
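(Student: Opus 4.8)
The plan is to deduce Corollary~\ref{cor:primitive1} directly from Theorem~\ref{thm:primitive1}, exploiting the factorization $F_{2n} = F_n S_n$ exactly as noted in the paragraph preceding the statement. Before that I would dispose of the characteristic $2$ case: when $p = 2$ one has $f^n + g^n = f^n - g^n$ in $R$, so $(S_n)_{n \ge 1} = (F_n)_{n \ge 1}$, $S^\prime = F^\prime$, and the assertion is literally Theorem~\ref{thm:primitive1}. From now on I may therefore assume $p \ne 2$, which in particular covers $p = 0$.

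A preliminary remark: since $f/g$ is not a root of unity, $f^n \ne g^n$ and $f^n \ne -g^n$ for every $n \ge 1$ (otherwise some power of $f/g$ would equal $1$), so all the $F_m$ and $S_m$ are non-zero polynomials and it is meaningful to speak of their prime divisors; recall also $F_{2m} = F_m S_m$ for every $m \ge 1$. Now fix an index $n$ with $n \ge 3$, and, in the case $p > 0$, with $p \nmid n$. Since $p \ne 2$ and $p \nmid n$ (the latter vacuous if $p = 0$), also $p \nmid 2n$, so $F_{2n}$ appears in $F^\prime$; as $2n \ge 6 > 2$ it lies beyond the second term of $F^\prime$. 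By Theorem~\ref{thm:primitive1}, $F_{2n}$ has a primitive prime divisor $\pi$, i.e. a monic irreducible $\pi \in R$ with $\pi \mid F_{2n}$ and $\pi \nmid F_j$ for all $j$ with $1 \le j < 2n$.

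I then claim $\pi$ is a primitive prime divisor of $S_n$. Since $R$ is a unique factorization domain and $F_{2n} = F_n S_n$, the prime $\pi$ divides $F_n$ or $S_n$; it cannot divide $F_n$, as that is the term of index $n < 2n$, hence $\pi \mid S_n$. And if $\pi \mid S_m$ for some $m$ with $1 \le m < n$, then $\pi \mid F_m S_m = F_{2m}$ with $2m < 2n$, contradicting the primitivity of $\pi$ for $F_{2n}$. Therefore $\pi \mid S_n$ while $\pi \nmid S_m$ for every $m < n$, which is exactly what the corollary asserts (and, with the hypothesis $p \nmid n$ vacuous, the same computation settles the case $p = 0$).

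The argument is essentially bookkeeping around $F_{2n} = F_n S_n$, and I do not expect any genuine obstacle. The one point deserving a separate word is the passage to $F_{2n}$: in characteristic $2$ the doubling $n \mapsto 2n$ always lands in the deleted part of $F^\prime$, so Theorem~\ref{thm:primitive1} is not available at $F_{2n}$ there — but in that characteristic $(S_n)$ and $(F_n)$ coincide, so the statement is already contained in Theorem~\ref{thm:primitive1}.
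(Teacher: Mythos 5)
Your proof is correct and follows exactly the paper's intended route: the paper derives the corollary from Theorem~\ref{thm:primitive1} via the factorization $F_{2n}=F_nS_n$, observing that a primitive prime divisor of $F_{2n}$ must come from $S_n$, and you have simply written out that bookkeeping (including the characteristic~$2$ degeneration $S_n=F_n$, which the paper handles implicitly with its remark that $(S_n)\ne(F_n)$ when $p\ne 2$). The only cosmetic caveat is that for $p>0$ your final claim ``$\pi\nmid S_m$ for every $m<n$'' is justified by primitivity in $F^\prime$ only for those $m$ with $p\nmid m$, but that is all the corollary requires, since primitivity there is measured within the pruned sequence $S^\prime$.
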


\section{Preliminaries}

Although the sequences we consider are defined over $R$, we prefer to establish some results in a more general setting. 
Throughout this section, let $D$ be a unique factorization domain. 

Recall that the resultant of two homogeneous polynomials in variables $X$ and $Y$ is defined to the determinant of their Sylvester matrix. 
Some basic properties of this resultant are listed  in the following lemma; see \cite[Proposition 2.3]{Silverman}. 

\begin{lemma}   \label{lem:Res}
For two non-constant homogeneous polynomials defined over a field 
$$
A(X,Y) = a_0 X^m + a_1 X^{m-1}Y + \ldots + a_m Y^m = a_0 \prod_{i=1}^{m} (X - \alpha_i Y)
$$  
and 
$$
B(X,Y) = b_0 X^n + b_1 X^{n-1}Y + \ldots + b_n Y^n = b_0 \prod_{j=1}^{n} (X - \beta_j Y), 
$$
their resultant is 
$$
\Res(A,B) = a_0^n b_0^m \prod_{i=1}^{m} \prod_{j=1}^{n} (\alpha_i - \beta_j) \in \Z[a_0, \ldots, a_m, b_0, \ldots, b_n]. 
$$
Moreover, there exist $G_1, H_1, G_2, H_2 \in \Z[a_0, \ldots, a_m, b_0, \ldots, b_n][X,Y]$ homogeneous in $X$ and $Y$ such that 
\begin{align*}
& G_1A + H_1 B = \Res(A,B)X^{m+n-1}, \\
& G_2A + H_2 B = \Res(A,B)Y^{m+n-1}.
\end{align*}
\end{lemma}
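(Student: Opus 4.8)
The plan is to prove the two assertions in turn, in each case first treating $a_0,\dots,a_m,b_0,\dots,b_n$ as independent indeterminates over $\Z$ and specializing to the ground field only at the end. For the product formula, recall that by definition $\Res(A,B)=\det S$, where $S$ is the $(m+n)\times(m+n)$ Sylvester matrix whose rows are the shifted coefficient vectors of $A$ and of $B$; expanding this determinant already shows $\Res(A,B)\in\Z[a_0,\dots,a_m,b_0,\dots,b_n]$ and that it is homogeneous of degree $n$ in $(a_0,\dots,a_m)$ and of degree $m$ in $(b_0,\dots,b_n)$. To obtain the factored form I would pass to the split picture: adjoin indeterminates $\alpha_1,\dots,\alpha_m,\beta_1,\dots,\beta_n$ and substitute $a_i=a_0(-1)^i e_i(\alpha_1,\dots,\alpha_m)$, $b_j=b_0(-1)^j e_j(\beta_1,\dots,\beta_n)$ (with $e_i$ the elementary symmetric polynomials), so that $A=a_0\prod_i(X-\alpha_iY)$ and $B=b_0\prod_j(X-\beta_jY)$ as in the statement. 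Homogeneity forces $\Res(A,B)=a_0^n b_0^m\,P$ for some $P\in\Z[\alpha_1,\dots,\alpha_m,\beta_1,\dots,\beta_n]$. Setting $\alpha_i=\beta_j$ makes $A$ and $B$ share the zero $(\alpha_i:1)$, hence the columns of $S$ dependent and $\det S=0$; so $P$ vanishes on each hyperplane $\{\alpha_i=\beta_j\}$, and since $\Z[\alpha,\beta]$ is a UFD in which the linear forms $\alpha_i-\beta_j$ are pairwise non-associate irreducibles, $\prod_{i,j}(\alpha_i-\beta_j)\mid P$. A degree count (both $P$ and $\prod_{i,j}(\alpha_i-\beta_j)$ are homogeneous of degree $n$ in the $\alpha$'s and $m$ in the $\beta$'s) gives $P=c\prod_{i,j}(\alpha_i-\beta_j)$ with $c\in\Z$, and evaluating at a convenient point — e.g.\ $\beta_1=\dots=\beta_n=0$, where $B=b_0X^n$ and $\det S$ is readily seen to equal $b_0^m a_m^n$ — forces $c=1$. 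Specializing the $\alpha_i,\beta_j$ to the roots of $A,B$ in an algebraic closure of the given field then yields the displayed formula; membership in $\Z[a_0,\dots,b_n]$ has already been recorded.

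For the Bézout-type identities, I would look at the $m+n$ homogeneous forms of degree $m+n-1$
$$
Y^{n-1}A,\ XY^{n-2}A,\ \dots,\ X^{n-1}A,\qquad Y^{m-1}B,\ XY^{m-2}B,\ \dots,\ X^{m-1}B,
$$
and express each in the monomial basis $X^{m+n-1},X^{m+n-2}Y,\dots,Y^{m+n-1}$ of the degree-$(m+n-1)$ forms. The transition matrix is exactly $S$ (up to ordering), so its determinant is $\pm\Res(A,B)$. Cramer's rule — equivalently, multiplication by the adjugate of $S$, whose entries are integer polynomials in the $a_i,b_j$ — then writes $\Res(A,B)$ times each basis monomial as an explicit $\Z[a_0,\dots,b_n]$-linear combination of the $m+n$ forms above. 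Taking the monomial $X^{m+n-1}$ and collecting the $A$-multiples and $B$-multiples gives $G_1A+H_1B=\Res(A,B)X^{m+n-1}$ with $G_1$ homogeneous of degree $n-1$, $H_1$ homogeneous of degree $m-1$ in $X,Y$, and coefficients in $\Z[a_0,\dots,b_n]$; taking $Y^{m+n-1}$ instead yields the companion identity with $G_2,H_2$.

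The main obstacle I anticipate is the bookkeeping in the first part: verifying that the substitution behaves with respect to degrees exactly as claimed, keeping everything a genuine identity in $\Z[a_0,\dots,b_n]$ before any specialization, and choosing the evaluation point so that the residual constant is visibly $1$ (and not merely $\pm1$). Once the product formula is secured, the second part is a routine, if notation-heavy, exercise with the Sylvester matrix and Cramer's rule, and both identities follow at once.
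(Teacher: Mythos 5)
The paper does not actually prove this lemma: it is quoted as a standard fact with a pointer to \cite[Proposition 2.3]{Silverman}, so there is no internal argument to compare against. Your proposal is the classical textbook proof, and both halves are structurally sound: the generic-roots substitution plus divisibility by each $\alpha_i-\beta_j$ for the product formula, and the adjugate/Cramer manipulation of the Sylvester matrix for the B\'ezout identities (the latter is exactly how the degree bounds $\deg G_1=n-1$, $\deg H_1=m-1$ and the integrality of the coefficients come out, and I see no issue there).

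Two points in the first half need tightening. First, the degree count is misstated: $\prod_{i,j}(\alpha_i-\beta_j)$ is not homogeneous of degree $n$ ``in the $\alpha$'s'' collectively (its total $\alpha$-degree is $mn$); the correct statement is that both $P$ and the product have degree at most $n$ in \emph{each individual} $\alpha_i$ and at most $m$ in each $\beta_j$ (for $P$ this follows because $\mathrm{Res}$ has degree $n$ in $(a_0,\dots,a_m)$ and each $e_k(\alpha)$ is multilinear), while the product has degree exactly $n$ in each $\alpha_i$; this forces the quotient to be a constant. Second, and more seriously, your normalization point is wrong by a sign: at $\beta_1=\dots=\beta_n=0$ one has $\det S=(-1)^{mn}a_m^n b_0^m$, not $a_m^n b_0^m$ (already visible for $m=n=1$, where $\det S=a_0b_1-a_1b_0$ specializes to $-a_1b_0$), so taking your claimed value at face value would yield $c=(-1)^{mn}$ rather than $c=1$. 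The conclusion $c=1$ is still correct once the sign of $\det S$ is computed properly, but the cleaner choice is to specialize $\alpha_1=\dots=\alpha_m=0$ instead: there $A=a_0X^m$, the Sylvester matrix is upper triangular with $\det S=a_0^n b_n^m$, and the product formula evaluates to exactly the same expression, so $c=1$ with no sign bookkeeping. With those repairs your argument is complete and proves precisely what the paper imports from the literature.
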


For any integer $n \ge 1$, the $n$-th homogeneous cyclotomic polynomial is defined by 
$$
\Phi_n (X,Y) = \prod_{k=1, \, \gcd(k,n)=1}^{n} (X - \zeta_n^k Y) \in \Z[X,Y], 
$$
where $\zeta_n$ is a primitive $n$-th root of unity, and we also define the polynomial
$$
P_n(X,Y) = \frac{X^n - Y^n}{X-Y} = \sum_{k=0}^{n-1} X^{n-1-k}Y^k = \prod_{k=1}^{n-1} (X - \zeta_n^k Y).
$$ 
Then, it is easy to see that 
$$
X^n - Y^n = \prod_{d \mid n} \Phi_d (X,Y), \qquad P_n (X,Y) = \prod_{d \mid n, \, d \ge 2} \Phi_d (X,Y). 
$$

The following result is \cite[Lemma 2.4]{FW} about the resultant of $P_m(X,Y)$ and $P_n(X,Y)$. 

\begin{lemma}  \label{lem:Res2}
For any positive coprime  integers $m$ and $n$, we have $\Res(P_m, P_n) = \pm 1$. 
\end{lemma}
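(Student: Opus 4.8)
The plan is to evaluate $\Res(P_m,P_n)$ by a direct computation from the factored forms of $P_m$ and $P_n$ into linear forms. Since the resultant is unchanged under extension of the base ring, we may compute over $\C$ even though $P_m,P_n\in\Z[X,Y]$; and we may assume $m,n\ge 2$, the cases $m=1$ or $n=1$ being trivial because then $P_1=1$ and the resultant is $1$. Applying Lemma~\ref{lem:Res} to $A=P_m$ and $B=P_n$, which are monic in $X$ of degrees $m-1$ and $n-1$ with root sets $\{\zeta_m^k:1\le k\le m-1\}$ and $\{\zeta_n^l:1\le l\le n-1\}$ respectively, gives
\[
\Res(P_m,P_n)\;=\;\prod_{k=1}^{m-1}\prod_{l=1}^{n-1}\bigl(\zeta_m^k-\zeta_n^l\bigr)\;=\;\prod_{k=1}^{m-1}P_n\bigl(\zeta_m^k,1\bigr),
\]
where the second equality uses $\prod_{l=1}^{n-1}(X-\zeta_n^l)=(X^n-1)/(X-1)=P_n(X,1)$.

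Next I would rewrite each factor, using the defining identity $P_n(X,Y)=(X^n-Y^n)/(X-Y)$ and the fact that $\zeta_m^k\ne 1$ for $1\le k\le m-1$, as $P_n(\zeta_m^k,1)=(\zeta_m^{kn}-1)/(\zeta_m^k-1)$, so that
\[
\Res(P_m,P_n)\;=\;\frac{\prod_{k=1}^{m-1}\bigl(\zeta_m^{kn}-1\bigr)}{\prod_{k=1}^{m-1}\bigl(\zeta_m^k-1\bigr)}.
\]
Because $\gcd(m,n)=1$ one checks $\zeta_m^{kn}\ne 1$ for $1\le k\le m-1$, so no factor vanishes and the quotient makes sense. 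The crux of the argument is the observation that, again because $\gcd(m,n)=1$, multiplication by $n$ permutes the nonzero residues $\{1,\dots,m-1\}$ modulo $m$; hence $\{\zeta_m^{kn}:1\le k\le m-1\}=\{\zeta_m^k:1\le k\le m-1\}$ as multisets, and therefore the numerator and denominator in the last display coincide, giving $\Res(P_m,P_n)=1$ (in particular $=\pm 1$).

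I do not anticipate a genuine obstacle: the whole argument is a short computation, and the only point requiring care is the bookkeeping that makes the numerator and denominator match — equivalently, checking that we have stripped off exactly the common factor $X-Y$ in passing from $X^m-Y^m$ to $P_m$, which is what both keeps every factor $\zeta_m^k-1$ nonzero and forces the permutation to act on $\{1,\dots,m-1\}$ rather than on $\{0,1,\dots,m-1\}$. As an alternative route one could instead use multiplicativity of the resultant in the identity $X^m-Y^m=(X-Y)P_m(X,Y)$ together with $\Res(X-Y,P_n)=P_n(1,1)=n$, reducing the claim to the same permutation fact; but the direct computation above seems the cleanest.
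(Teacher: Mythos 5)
Your argument is correct, and it in fact sharpens the statement to $\Res(P_m,P_n)=1$ exactly. Note, though, that the paper does not prove this lemma itself: it is quoted verbatim as \cite[Lemma 2.4]{FW}. The argument used there --- and replayed inside this paper's own proof of Lemma~\ref{lem:Pmn} --- starts from the same product formula $\Res(P_m,P_n)=\prod_{k,l}(\zeta_m^k-\zeta_n^l)$, but then writes each factor as a root of unity times $1-\zeta$ where $\zeta=\zeta_m^{-k}\zeta_n^{l}$ has order divisible by two distinct primes (this is where $\gcd(m,n)=1$ enters), invokes \cite[Proposition 2.8]{Was} to conclude that each factor is a unit in the ring of cyclotomic integers, and finishes by observing that a rational integer which is an algebraic unit must be $\pm 1$. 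Your computation replaces this algebraic-number-theoretic input with the elementary observation that multiplication by $n$ permutes the nonzero residues modulo $m$, so that $\prod_{k=1}^{m-1}(\zeta_m^{kn}-1)$ literally coincides with $\prod_{k=1}^{m-1}(\zeta_m^{k}-1)$ and the quotient telescopes to $1$. This is more self-contained and pins down the sign; what the cyclotomic-unit argument buys in exchange is flexibility, since it also handles the mixed resultants appearing later (e.g.\ $\Res(V_m,W_n)$ in Lemma~\ref{lem:Pmn}), where the factors do not pair off so neatly.
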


We now want to establish some results about coprime elements in $D$. 
First, we prove a general result. 

\begin{lemma}  \label{lem:coprime}
Let $a, b$ be algebraic elements over $D$. 
Assume that $(a+b)^2$ and $ab$ are coprime elements in $D$. 
Let $A(X,Y), B(X,Y) \in \Z[X,Y]$ be non-constant homogeneous polynomials with resultant $\Res(A,B) = \pm 1$. 
Assume that both $A(a,b)$ and $B(a,b)$ are in $D$. 
Then, $A(a,b)$ and $B(a,b)$ are coprime in $D$. 
\end{lemma}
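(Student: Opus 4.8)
The plan is to argue by contradiction: suppose some prime divisor $\pi$ of $D$ divides both $A(a,b)$ and $B(a,b)$. The natural tool is the second part of Lemma~\ref{lem:Res}, which gives homogeneous polynomials $G_1,H_1 \in \Z[X,Y]$ with $G_1 A + H_1 B = \Res(A,B)\, X^{m+n-1} = \pm X^{m+n-1}$, and similarly $G_2 A + H_2 B = \pm Y^{m+n-1}$. Substituting $X = a$, $Y = b$, we would like to conclude that $\pi$ divides both $a^{m+n-1}$ and $b^{m+n-1}$. The subtlety — and this is where the hypotheses on $(a+b)^2$ and $ab$ enter — is that $a$ and $b$ need not lie in $D$, only be algebraic over it, so "$\pi$ divides $a^{m+n-1}$" needs to be interpreted and handled with care.

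First I would reduce the divisibility statements to honest elements of $D$. Since $A, B, G_i, H_i$ are homogeneous in $X, Y$, the products $G_i(a,b)A(a,b)$ and $H_i(a,b)B(a,b)$, while perhaps not individually in $D$, combine to give $a^{m+n-1}$ and $b^{m+n-1}$ times a unit. The cleanest route is to multiply through by a suitable symmetric expression: because $A(a,b), B(a,b) \in D$ and the relations are homogeneous of the same degree, one can clear denominators so that $\pi \mid a^{N} \cdot c$ and $\pi \mid b^{N} \cdot c$ in $D$ for appropriate $N$ and some fixed $c \in D$ coprime to $\pi$ (tracking that $c$ is built from $ab$ and $a+b$), hence $\pi$ divides a power of $ab$. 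But $ab \in D$ by hypothesis, so $\pi \mid ab$. Symmetrically — working with $(a+b)$ in place of single powers, or by noting $(a^N + b^N)$-type symmetric functions lie in $D$ — one shows $\pi$ divides a power of $(a+b)^2$, hence $\pi \mid (a+b)^2$.

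Then $\pi$ is a common prime divisor of $ab$ and $(a+b)^2$ in $D$, contradicting the hypothesis that these two elements are coprime in $D$. This completes the contradiction and the proof.

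The main obstacle I anticipate is the first step: making rigorous the passage from the homogeneous Bézout-type identities of Lemma~\ref{lem:Res}, evaluated at elements $a, b$ that only live in some extension of $D$, to genuine divisibility relations inside the UFD $D$. One must be careful that the "unit" and "coprime-to-$\pi$" factors picked up along the way really are built only out of $ab$ and $a+b$ (equivalently, out of symmetric functions of $a, b$), so that the final contradiction is with the stated coprimality of $(a+b)^2$ and $ab$ rather than with something stronger. Everything after that is a routine divisibility bookkeeping argument in a UFD.
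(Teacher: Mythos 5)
Your overall strategy coincides with the paper's: argue by contradiction, feed the Bézout-type identities of Lemma~\ref{lem:Res} into the hypothesis $\Res(A,B)=\pm 1$, and drive the common prime $\pi$ into dividing both $ab$ and $(a+b)^2$. However, the step you flag as ``the main obstacle I anticipate'' is precisely the heart of the proof, and you leave it unresolved. Two concrete things are missing. First, your intermediate claim that one can arrange $\pi \mid a^N c$ and $\pi \mid b^N c$ \emph{in $D$} is already problematic: $a^N c$ need not lie in $D$ at all, since only $(a+b)^2$ and $ab$ (hence only symmetric polynomials in $a^2, b^2$) are guaranteed to land in $D$. The paper sidesteps this by never isolating powers of $a$ or $b$: it combines the two identities of Lemma~\ref{lem:Res} into the \emph{symmetric} elements $a^{2(m+n-1)}+b^{2(m+n-1)}$ and $(ab)^{2(m+n-1)}$, both of which manifestly lie in $D$. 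Second, even for these symmetric combinations, the Bézout coefficients $u_i, w_i$ live only in $\Z[a,b]$, not in $D$, so ``$\pi$ divides $A(a,b)$ and $B(a,b)$, hence $\pi$ divides $u_1A(a,b)+w_1B(a,b)$'' is not a divisibility statement in $D$. The paper's fix is the key idea you are missing: the quotient $\bigl(u_1A(a,b)+w_1B(a,b)\bigr)/\pi$ lies in the fraction field of $D$ and is \emph{integral} over $D$ (because $a,b$ satisfy $X^4-(a^2+b^2)X^2+a^2b^2=0$ over $D$ and $A(a,b)/\pi, B(a,b)/\pi \in D$), and a UFD is integrally closed, so the quotient is in $D$.

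There is also a smaller gap at the end. From the two symmetric combinations one gets $\pi \mid ab$ directly, but $\pi \mid (a+b)^2$ does \emph{not} follow ``symmetrically'' from the resultant identities; the paper derives it by expanding $(a^2+b^2)^{m+n-1}$ binomially, observing that $(a^2+b^2)^{k} - a^{2k}-b^{2k}$ is $a^2b^2$ times an element of $D$, concluding $\pi \mid a^2+b^2$ and hence $\pi \mid (a+b)^2 = a^2+b^2+2ab$. You would need to supply this computation (or an equivalent one) to reach the contradiction with the coprimality of $(a+b)^2$ and $ab$.
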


\begin{proof}
Let $m = \deg A$ and $n = \deg B$. 
By assumption, $a^2 + b^2$ and $ab$ are also coprime in $D$. 
Note that for any  integer $k \ge 1$, $a^{2k} + b^{2k}$ is in $D$. 
Using Lemma~\ref{lem:Res} and noticing $\Res(A,B) = \pm 1$, we obtain that there exist $u_1, w_1, u_2, w_2 \in \Z[a, b]$ such that 
$$
u_1 A(a,b) + w_1B(a,b) = a^{2(m+n-1)} + b^{2(m+n-1)} \in D
$$
and 
$$
u_2 A(a,b) + w_2 B(a,b) = a^{2(m+n-1)}b^{2(m+n-1)} \in D. 
$$
Note that $u_1, w_1, u_2, w_2$ might be not in $D$. 

By contradiction, suppose that $A(a,b)$ and $B(a,b)$ are not coprime in $D$. 
Then, there is a prime element $\pi \in D$ such that $\pi \mid A(a,b)$ and $\pi \mid B(a,b)$ in $D$. 

By the above discussion, we obtain that both 
$$
\frac{u_1 A(a,b) + w_1 B(a,b)}{\pi}, \quad \frac{u_2 A(a,b) + w_2 B(a,b)}{\pi}
$$
are in the fraction field of $D$ and integral over $D$ 
(because $a,b$ are integral over $D$ satisfying the equation $X^4 - (a^2 + b^2)X^2 + a^2b^2 = 0$). 
Note that $D$ is an integrally closed domain, so these two quotients are both in $D$. 
Hence, we have $\pi \mid a^{2(m+n-1)} + b^{2(m+n-1)}$ and $\pi \mid a^{2(m+n-1)}b^{2(m+n-1)}$ in $D$. 
So, $\pi \mid ab$ in $D$. 

Let $k = m+n-1$. We have known that $\pi \mid a^{2k} + b^{2k}$ and $\pi \mid ab$ in $D$. 
Consider 
\begin{align*}
(a^2 + b^2)^k & = a^{2k} + b^{2k} + \sum_{i=1}^{k-1}\binom{k}{i}(a^2)^i (b^2)^{k-i}  \\
& =  a^{2k} + b^{2k} + a^2b^2 \sum_{i=1}^{k-1}\binom{k}{i}(a^2)^{i-1} (b^2)^{k-1-i}.
\end{align*}
Note that $\sum_{i=1}^{k-1}\binom{k}{i}(a^2)^{i-1} (b^2)^{k-1-i}$ is also in $D$, because it is symmetric in $a^2$ and $b^2$. 
Hence, we have $\pi \mid (a^2 + b^2)^{k}$, and then $\pi \mid a^2 + b^2$,  so $\pi \mid (a + b)^2$ in $D$ (because $\pi \mid ab$). 
This leads to a contradiction with the assumption that $(a + b)^2$ and $ab$ are coprime in $D$. 
Therefore, $A(a,b)$ and $B(a,b)$ are coprime in $D$. 
\end{proof}

Based on Lemma~\ref{lem:coprime}, we can derive several results about coprime elements in $R$ in the sequel. 

\begin{lemma}  \label{lem:PmPn2}
Let $a, b$ be two algebraic elements over $D$. 
Assume that $a+b$ and $ab$ are coprime elements in $D$. 
Then, for any positive coprime integers $m, n$, $P_m(a,b)$ and $P_n(a,b)$ are coprime in $D$. 
\end{lemma}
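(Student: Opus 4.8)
The plan is to deduce Lemma~\ref{lem:PmPn2} from Lemma~\ref{lem:coprime}, applied with $A = P_m$ and $B = P_n$. First I would dispose of the degenerate cases: if $m = 1$ or $n = 1$, then $P_1(a,b) = 1$ is a unit of $D$, so coprimality is automatic. Hence I may assume $m, n \ge 2$, and then $P_m(X,Y)$ and $P_n(X,Y)$ are genuinely non-constant homogeneous polynomials in $\Z[X,Y]$, of degrees $m-1$ and $n-1$ respectively.

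Next I would verify, one by one, the hypotheses of Lemma~\ref{lem:coprime}. Since $m$ and $n$ are coprime, Lemma~\ref{lem:Res2} gives $\Res(P_m, P_n) = \pm 1$. The hypothesis that $a+b$ and $ab$ are coprime in $D$ forces $(a+b)^2$ and $ab$ to be coprime in $D$ as well, because any prime element of $D$ dividing $(a+b)^2$ already divides $a+b$. It then remains to check that $P_m(a,b)$ and $P_n(a,b)$ actually lie in $D$. For this I would note that $P_m(X,Y) = (X^m - Y^m)/(X-Y)$ is invariant under interchanging $X$ and $Y$, so by the fundamental theorem of symmetric polynomials it equals a polynomial with integer coefficients in the elementary symmetric functions $X+Y$ and $XY$; substituting $X = a$, $Y = b$ and using $a+b, ab \in D$ then gives $P_m(a,b) \in D$, and likewise $P_n(a,b) \in D$.

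With all the hypotheses in hand, Lemma~\ref{lem:coprime} applies and yields that $P_m(a,b)$ and $P_n(a,b)$ are coprime in $D$, which is exactly the assertion. The only step that is not a purely mechanical citation, and thus the \emph{main obstacle}, is the membership $P_m(a,b), P_n(a,b) \in D$: since $a$ and $b$ are merely algebraic over $D$ and need not lie in $D$ themselves, this genuinely requires exploiting the symmetry of $P_m$ and $P_n$ together with $a+b, ab \in D$. Everything else reduces directly to Lemmas~\ref{lem:Res2} and~\ref{lem:coprime}.
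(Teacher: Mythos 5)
Your proposal is correct and follows essentially the same route as the paper: reduce to $m,n\ge 2$, use the symmetry of $P_m$ and $P_n$ in $X$ and $Y$ together with $a+b,ab\in D$ to see that $P_m(a,b),P_n(a,b)\in D$, note that coprimality of $a+b$ and $ab$ gives coprimality of $(a+b)^2$ and $ab$, and conclude via Lemmas~\ref{lem:Res2} and~\ref{lem:coprime}. Your write-up is in fact somewhat more detailed than the paper's, which leaves the membership $P_m(a,b)\in D$ as a one-line remark.
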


\begin{proof}
Without loss of generality, we can assume $m \ge 2, n \ge 2$.
Clearly, both $P_m(a,b)$ and $P_n(a,b)$ are in $D$. Because both $P_m(X,Y)$ and $P_n(X,Y)$ are symmetric in $X$ and $Y$. 

By assumption, $(a+b)^2$ and $ab$ are coprime elements in $D$. 
Then, the desired result follows from Lemmas~\ref{lem:Res2} and \ref{lem:coprime}. 
\end{proof}

\begin{lemma}  \label{lem:PmPn-odd}
Let $a, b$ be defined as in Lemma~\ref{lem:coprime}. 
Let $m,n$ be two positive coprime  integers such that both $m$ and $n$ are odd. 
Then, $P_m(a,b)$ and $P_n(a,b)$ are coprime in $D$. 
\end{lemma}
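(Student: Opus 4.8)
The plan is to reduce the statement to Lemmas~\ref{lem:Res2} and \ref{lem:coprime}, exactly as in the proof of Lemma~\ref{lem:PmPn2}; the only genuinely new point is to check that $P_m(a,b)$ and $P_n(a,b)$ actually lie in $D$ under the weaker hypothesis that merely $(a+b)^2$, rather than $a+b$ itself, belongs to $D$ — and this is precisely what forces the oddness assumption on $m$ and $n$.

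First I would dispose of the trivial cases $m=1$ or $n=1$: since $P_1(X,Y)=1$ is a unit of $D$, it is coprime to every element, so we may assume $m,n\ge 3$ (both being odd). Then $P_m$ and $P_n$ are non-constant homogeneous polynomials in $\Z[X,Y]$, which is what Lemma~\ref{lem:coprime} requires of its inputs.

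The key observation is the parity one. Because $m$ is odd, $P_m(X,Y)$ is homogeneous of even degree $m-1$, so every monomial occurring in it has even total degree; in particular $P_m(-X,-Y)=P_m(X,Y)$, and of course $P_m$ is symmetric in $X,Y$. A polynomial invariant under both $X\leftrightarrow Y$ and $(X,Y)\mapsto(-X,-Y)$ lies in the invariant ring $\Z[(X+Y)^2,\,XY]$, so $P_m(X,Y)=Q_m\!\left((X+Y)^2, XY\right)$ for some $Q_m\in\Z[S,T]$. Substituting $X=a$, $Y=b$ and using that $(a+b)^2\in D$ and $ab\in D$ gives $P_m(a,b)\in D$, and likewise $P_n(a,b)\in D$.

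Finally, Lemma~\ref{lem:Res2} gives $\Res(P_m,P_n)=\pm 1$, so all hypotheses of Lemma~\ref{lem:coprime} are met with $A=P_m$, $B=P_n$ — namely $(a+b)^2$ and $ab$ coprime in $D$, both $P_m(a,b)$ and $P_n(a,b)$ in $D$, and $\Res(A,B)=\pm 1$ — and that lemma yields that $P_m(a,b)$ and $P_n(a,b)$ are coprime in $D$. I do not anticipate any serious obstacle here; the only step needing a moment's care is the invariant-theory/parity argument putting $P_m(a,b)$ into $D$, which is exactly the reason the lemma is stated for odd $m,n$ rather than for arbitrary coprime $m,n$ as in Lemma~\ref{lem:PmPn2}.
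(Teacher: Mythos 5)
Your proposal is correct and takes essentially the same approach as the paper: reduce to Lemmas~\ref{lem:Res2} and \ref{lem:coprime} after verifying that $P_m(a,b)$ and $P_n(a,b)$ lie in $D$, which is exactly where the oddness of $m,n$ enters. The paper checks membership in $D$ by pairing the terms $X^iY^j$ and $X^jY^i$ and writing $a^ib^j+a^jb^i=(ab)^i(a^{j-i}+b^{j-i})$ with $j-i$ even, while you phrase the same symmetry-plus-parity observation via the invariant ring $\Z[(X+Y)^2,\,XY]$; the two verifications are equivalent and both valid.
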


\begin{proof}
Without loss of generality, we can assume $m \ge 3, n \ge 3$. 
Since both $(a+b)^2$ and $ab$ are in $D$, we have $a^2 + b^2 \in D$. 
Moreover, we have that for any integer $k \ge 1$, $a^{2k} + b^{2k} \in D$.

Note that $P_m(X,Y)$ is homogeneous of even degree $m-1$ and symmetric in $X$ and $Y$. 
So, if $X^iY^j$ is a term in $P_m(X,Y)$, then $X^jY^i$ is also a term in $P_m(X,Y)$, and then assuming $i \le j$, we have
$$
a^i b^j + a^j b^i = (ab)^i (a^{j-i} + b^{j-i}) \in D, 
$$
where we use the fact that $j - i$ is even (because $i + j = m-1$ is even). 
Hence, we have that $P_m(a,b)$ is in $D$. 
Similarly, $P_n(a,b)$ is also in $D$. 
Now, the desired result follows directly from Lemmas~\ref{lem:Res2} and \ref{lem:coprime}. 
\end{proof}

\begin{lemma}  \label{lem:PmPn-mix}
Let $a, b$ be defined as in Lemma~\ref{lem:coprime}. 
Let $m,n$ be two positive coprime  integers such that  $m$ is odd and $n$ is even. 
Then, $P_m(a,b)$ and $P_n(a,b)/(a + b)$ are coprime in $D$. 
\end{lemma}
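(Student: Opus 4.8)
The plan is to reduce the mixed case to the machinery of Lemma~\ref{lem:coprime} exactly as in the two preceding lemmas, the only new wrinkle being the division by $a+b$ in the second argument. Write $Q_n(X,Y) = P_n(X,Y)/(X+Y)$; since $n$ is even, $X+Y = \Phi_2(X,Y)$ divides $P_n(X,Y)$ in $\Z[X,Y]$, so $Q_n \in \Z[X,Y]$ is a genuine homogeneous polynomial, of odd degree $n-2$, and it is symmetric in $X$ and $Y$. First I would check that $P_m(a,b)$ and $Q_n(a,b)$ both lie in $D$: for $P_m(a,b)$ this is the computation already carried out in the proof of Lemma~\ref{lem:PmPn-odd} (pair the monomial $X^iY^j$ with $X^jY^i$ and use that $i+j = m-1$ is even, so $a^ib^j + a^jb^i = (ab)^i(a^{j-i}+b^{j-i}) \in D$); for $Q_n(a,b)$ the same pairing argument applies because $\deg Q_n = n-2$ is odd, so in each pair $i+j = n-2$ is odd and $a^ib^j + a^jb^i = (ab)^i(a^{j-i}+b^{j-i})$, with $j-i$ odd — wait, that gives $a^{j-i}+b^{j-i}$ with odd exponent, which need not be in $D$. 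Here I would instead argue that $a^ib^j + a^jb^i = (ab)^i(a^{j-i}+b^{j-i})$ and note that when $j-i$ is odd this is not obviously symmetric; so the cleaner route is to observe that $Q_n(a,b) = P_n(a,b)/(a+b)$, that $P_n(a,b)$ is symmetric hence in $D$, and that $P_n(a,b)$ is divisible by $a+b$ in $D$ — this last divisibility follows from Lemma~\ref{lem:coprime}-type reasoning or, more directly, from the factorization $P_n = \prod_{d\mid n,\, d\ge 2}\Phi_d$ together with $\Phi_2 = X+Y$, combined with the observation that the cofactor $\prod_{d\mid n,\, d\ge 2,\, d\ne 2}\Phi_d(a,b)$ is symmetric of even degree $n-2$ wait no.

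Let me restructure: the robust argument is that $Q_n(X,Y)$, being symmetric of degree $n-2$, can be written as a $\Z$-polynomial in the elementary symmetric functions $s = X+Y$ and $t = XY$ only if $n-2$ is even; since $n-2$ is odd, instead $Q_n = s \cdot (\text{symmetric of even degree } n-3)$? That is false in general. The correct and simplest claim: because $a^2+b^2, ab \in D$ and $(a+b)^2 \in D$, in fact every symmetric polynomial in $a,b$ of \emph{even} degree lies in $D$, and $a+b$ times any such also lies in $D$; and $Q_n(a,b)$, while of odd degree, satisfies $(a+b)Q_n(a,b) = P_n(a,b)$ where $P_n(a,b) \in D$ (even degree, symmetric), so $Q_n(a,b) = P_n(a,b)/(a+b)$ lies in the fraction field of $D$ and is integral over $D$ (as $a,b$ are), hence in $D$ by integral closedness — this is precisely the trick used inside Lemma~\ref{lem:coprime}, and it is clean. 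So the first block of the proof is: establish $P_m(a,b) \in D$ by the pairing argument, and establish $Q_n(a,b) \in D$ by the integrality-and-integral-closedness argument.

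The second block is the resultant computation. I would invoke Lemma~\ref{lem:Res2} to get $\Res(P_m, P_n) = \pm 1$ (valid since $\gcd(m,n)=1$), and then relate $\Res(P_m, Q_n)$ to $\Res(P_m, P_n)$ and $\Res(P_m, \Phi_2) = \Res(P_m, X+Y)$. Since $P_n = \Phi_2 \cdot Q_n$, multiplicativity of the resultant in the second argument gives $\Res(P_m, P_n) = \Res(P_m, \Phi_2)\,\Res(P_m, Q_n)$, so it suffices to show $\Res(P_m, X+Y) = \pm 1$; but $\Res(P_m, X+Y) = \pm P_m(1,-1) = \pm \sum_{k=0}^{m-1}(-1)^k = \pm 1$ because $m$ is odd. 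Hence $\Res(P_m, Q_n) = \pm 1$. Now apply Lemma~\ref{lem:coprime} with $A = P_m$, $B = Q_n$ (both non-constant homogeneous in $\Z[X,Y]$ — note $m \ge 3$ so $P_m$ is non-constant, and $n \ge 4$ so $Q_n$ has degree $n-2 \ge 2$, non-constant; the degenerate small cases $n=2$, where $Q_n = 1$, make the statement trivial and can be dispatched separately), using the hypothesis that $(a+b)^2$ and $ab$ are coprime in $D$, to conclude that $P_m(a,b)$ and $Q_n(a,b) = P_n(a,b)/(a+b)$ are coprime in $D$.

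The main obstacle I anticipate is purely bookkeeping around the division by $a+b$: making sure $Q_n$ really is an honest integer polynomial (it is, since $\Phi_2 \mid P_n$ for even $n$), that $Q_n(a,b)$ genuinely lands in $D$ (the integral-closedness argument handles this), and that the resultant factors as claimed with $\Res(P_m, X+Y) = \pm 1$ precisely because $m$ is odd — this last point is where the parity hypotheses on $m$ and $n$ are actually consumed, so it deserves to be stated carefully. Once those are in place the proof is a three-line reduction to Lemmas~\ref{lem:Res2} and \ref{lem:coprime}.
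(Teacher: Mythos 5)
Your overall architecture is the right one and matches the paper's: reduce to Lemma~\ref{lem:coprime} by (i) checking that $P_m(a,b)$ and $T_n(a,b):=P_n(a,b)/(a+b)$ lie in $D$ and (ii) showing $\Res(P_m,T_n)=\pm 1$. Your resultant step is correct and in fact more explicit than the paper's (which just cites the argument of \cite[Lemma 2.4]{FW}): multiplicativity gives $\Res(P_m,P_n)=\Res(P_m,X+Y)\cdot\Res(P_m,T_n)$, the left side is $\pm1$ by Lemma~\ref{lem:Res2}, and both factors are integers, so $\Res(P_m,T_n)=\pm1$; your observation that $\Res(P_m,X+Y)=\pm P_m(1,-1)=\pm1$ is exactly where the oddness of $m$ enters.

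The genuine gap is in step (i), in your justification that $T_n(a,b)\in D$. You assert that $P_n(a,b)\in D$ (``even degree, symmetric'') and then divide by $a+b$. Both halves fail: $P_n$ has degree $n-1$, which is \emph{odd} since $n$ is even, and $P_n(a,b)$ need not lie in $D$ --- already $P_2(a,b)=a+b$ and $P_4(a,b)=(a+b)(a^2+b^2)$ are generally not in $D$, since only $(a+b)^2$ is assumed to be in $D$. Moreover $a+b$ need not lie in the fraction field of $D$, so ``$P_n(a,b)/(a+b)$ lies in the fraction field of $D$'' does not follow from this route, and the integral-closedness trick cannot be launched. The irony is that you wrote down the correct argument and then discarded it because of a parity slip: $\deg T_n=n-2$ is \emph{even} (not odd), and $T_n$ is symmetric, so every monomial $s^it^j$ in its expression in $s=X+Y$, $t=XY$ has $i+2j=n-2$ even, hence $i$ even; thus $T_n(a,b)$ is a $\Z$-polynomial in $(a+b)^2$ and $ab$, both of which lie in $D$. (Equivalently, pair $X^iY^j$ with $X^jY^i$ in $T_n$: here $i+j=n-2$ is even, so $j-i$ is even and $a^ib^j+a^jb^i=(ab)^i(a^{j-i}+b^{j-i})\in D$.) This is essentially what the paper does, via the expansion $T_n(a,b)=\sum_{i}(ab)^i\,\frac{a^{n-1-2i}+b^{n-1-2i}}{a+b}$ with each quotient symmetric of even degree. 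With that repair, your proof is complete and correct.
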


\begin{proof}
Without loss of generality, we can assume $m \ge 3, n \ge 4$ (because $P_2(a,b)/(a + b)=1$). 
Since $m$ is odd, as in the proof of Lemma~\ref{lem:PmPn-odd} $P_m(a,b)$ is in $D$. 
For any odd integer $k \ge 1$, note that
$$
\frac{a^k + b^k}{a+b} = a^{k-1} - a^{k-2}b + \ldots - ab^{k-2} + b^{k-1}
$$
is homogeneous of even degree $k-1$ and is symmetric in $a$ and $b$, so it is in $D$. 
Hence, for even $n$, since 
\begin{align*}
\frac{P_n(a,b)}{a + b} = \frac{a^{n-1} + b^{n-1}}{a+b}  + ab \cdot \frac{a^{n-3} + b^{n-3}}{a+b} + \ldots 
 + a^{\frac{n-2}{2}}b^{\frac{n-2}{2}} \cdot \frac{a + b}{a+b}, 
\end{align*}
we have that $P_n(a,b)/(a + b)$ is in $D$. 

Denote $T_n(X,Y) = P_n(X,Y)/(X+Y)$, which can be viewed as a polynomial over $\Z$.
Using Lemma~\ref{lem:Res} and applying the same arguments as in the proof of \cite[Lemma 2.4]{FW}, 
we obtain that the resultant of $P_m(X,Y)$ and $T_n(X,Y)$ is equal to $\pm 1$. 

Hence, by Lemma~\ref{lem:coprime}, $P_m(a,b)$ and $T_n(a,b)$ are coprime in $D$.
\end{proof}

\begin{lemma}  \label{lem:Pmn}
Let $a, b$ be defined as in Lemma~\ref{lem:coprime}. 
Let $m, n$ be two positive integers such that both $m$ and $n$ are odd. 
Then, $P_m(a^n,b^n)$ and $(a^n + b^n)/(a + b)$ are coprime in $D$. 
\end{lemma}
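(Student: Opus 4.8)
The plan is to realize both $P_m(a^n,b^n)$ and $(a^n+b^n)/(a+b)$ as values at $(a,b)$ of integer homogeneous polynomials whose resultant is $\pm1$, and then to invoke Lemma~\ref{lem:coprime}. I would first dispose of the trivial cases: if $m=1$ then $P_m(a^n,b^n)=1$, and if $n=1$ then $(a^n+b^n)/(a+b)=1$, so in either case there is nothing to prove; thus assume henceforth $m\ge 3$ and $n\ge 3$. Put
\[
Q(X,Y)=P_m(X^n,Y^n)=\frac{X^{mn}-Y^{mn}}{X^n-Y^n},\qquad
V(X,Y)=\frac{X^n+Y^n}{X+Y};
\]
both lie in $\Z[X,Y]$ (the second because $n$ is odd), both are monic, homogeneous and symmetric in $X$ and $Y$, of degrees $n(m-1)$ and $n-1$ respectively, and both degrees are even since $m$ and $n$ are odd.

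The next thing to verify is that $Q(a,b)=P_m(a^n,b^n)$ and $V(a,b)=(a^n+b^n)/(a+b)$ actually lie in $D$. This is exactly the argument already used in the proofs of Lemmas~\ref{lem:PmPn-odd} and~\ref{lem:PmPn-mix}: a homogeneous symmetric polynomial of even degree, evaluated at $(a,b)$, is a $\Z$-linear combination of terms $(ab)^i\bigl(a^{j-i}+b^{j-i}\bigr)$ with $j-i$ even, and every such term lies in $D$ because $ab\in D$ and $a^{2k}+b^{2k}\in D$ for all $k\ge 0$.

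The substantive step is to show $\Res(V,Q)=\pm1$. Since $V$ and $Q$ are monic, Lemma~\ref{lem:Res} yields $\Res(V,Q)=\prod_{\omega}Q(\omega,1)$, the product running over the roots $\omega$ of $V$, which are precisely the $n$-th roots of $-1$ other than $-1$. For each of them $\omega^n=-1$, hence
\[
Q(\omega,1)=P_m(\omega^n,1)=P_m(-1,1)=\frac{(-1)^m-1}{(-1)-1}=1
\]
because $m$ is odd; therefore $\Res(V,Q)=1$. Now Lemma~\ref{lem:coprime}, applied to the pair $(V,Q)$ — which is legitimate since $V(a,b),Q(a,b)\in D$ by the previous step while $(a+b)^2$ and $ab$ are coprime in $D$ by hypothesis — gives that $(a^n+b^n)/(a+b)$ and $P_m(a^n,b^n)$ are coprime in $D$, as desired. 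I expect the whole argument to be short, and the only delicate point is the resultant evaluation; but unlike the coprime-index resultants of \cite{FW} it collapses at once to the single identity $P_m(-1,1)=1$, which is exactly where the oddness of $m$ enters (the oddness of $n$ is used to make $V$ a polynomial and to make both degrees even so that the values lie in $D$).
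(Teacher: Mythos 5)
Your proof is correct, and while it follows the same overall skeleton as the paper (realize both quantities as values of integer homogeneous polynomials, show the resultant is $\pm 1$, then invoke Lemma~\ref{lem:coprime}), the key computation is done by a genuinely different and more elementary route. The paper factors both $P_m(X^n,Y^n)$ and $(X^n+Y^n)/(X+Y)$ completely into linear forms, writes the resultant as $\prod_{i,j,k}(\zeta_{mn}^i\zeta_n^j+\zeta_n^k)$, and then argues that each factor is a cyclotomic unit of the shape $\zeta_{m'}(1-\zeta_2\zeta_{n'})$ with $n'\ge 3$ odd, appealing to \cite[Proposition~2.8]{Was}; a rational integer that is a product of algebraic units must be $\pm 1$. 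You instead use the form $\Res(V,Q)=\prod_\omega Q(\omega,1)$ over the roots $\omega$ of $V$ (legitimate, since both polynomials are monic, by the product formula in Lemma~\ref{lem:Res}), and exploit the fact that $Q$ depends on its arguments only through their $n$-th powers while every root of $V$ satisfies $\omega^n=-1$: each factor collapses to $P_m(-1,1)=1$ because $m$ is odd. This avoids the cyclotomic-unit machinery entirely, yields the exact value $1$ rather than merely $\pm 1$, and isolates cleanly where the oddness of $m$ and of $n$ each enter. The remaining ingredients — the reduction to $m,n\ge 3$, the verification that the two values lie in $D$ via the symmetric-homogeneous-of-even-degree argument, and the final appeal to Lemma~\ref{lem:coprime} — coincide with the paper's. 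The paper's method has the virtue of being uniform with its other resultant computations (following \cite{FW}), but your shortcut is sound and arguably preferable here.
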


\begin{proof}
Without loss of generality, we can assume $m \ge 3, n \ge 3$. 
As before, since $m$ and $n$ are odd, both $P_m(a^n,b^n)$ and $(a^n + b^n)/(a + b)$ are indeed in $D$.

Define
$$
V_m(X,Y) =P_m(X^n, Y^n), \qquad  W_n(X,Y) = \frac{X^n + Y^n}{X+Y}. 
$$
Both $V_m$ and $W_n$ can be viewed as polynomials over $\Z$. 
So, we first compute their resultant over $\Z$. 
Note that 
$$
V_m(X,Y) = \prod_{i=1}^{m-1}(X^n - \zeta_m^i Y^n) = \prod_{i=1}^{m-1}\prod_{j=1}^{n} (X - \zeta_n^j \zeta_{mn}^i  Y), 
$$
and 
$$
W_n(X,Y) =  \frac{X^n - (-Y)^n}{X+Y} = \prod_{k=1}^{n-1}(X + \zeta_n^k Y).
$$
By Lemma~\ref{lem:Res}, the resultant 
$$
\Res(V_m, W_n) = \prod_{i=1}^{m-1}\prod_{j=1}^{n} \prod_{k=1}^{n-1} ( \zeta_{mn}^i \zeta_n^j + \zeta_n^k) \in \Z. 
$$ 
For each factor $\zeta_{mn}^i \zeta_n^j + \zeta_n^k$ in the resultant, we have $\zeta_{mn}^i \zeta_n^j \ne \zeta_n^k$ 
(because otherwise we would have $\zeta_{mn}^i \zeta_{mn}^{mj} = \zeta_{mn}^{mk}$, and then $m \mid i$, but $1 \le i \le m-1$), 
and so $\zeta_{mn}^i \zeta_n^j + \zeta_n^k = \zeta_{m^\prime}(1 -\zeta_2\zeta_{n^\prime})$ 
for some odd integer $m^\prime$ and odd integer $n^\prime \ge 3$ (noticing both $m,n$ are odd), 
and thus it is a unit by \cite[Proposition 2.8]{Was}. 
Hence, $\Res(V_m, W_n)$ is a unit in $\Z$, that is, $\Res(V_m, W_n) = \pm 1$. 

Therefore, as polynomials over $D$, we also have $\Res(V_m, W_n) = \pm 1$. 
Then, by Lemma~\ref{lem:coprime}, $V_m(a,b)$ and $W_n(a,b)$ are coprime in $D$.
\end{proof}

\begin{lemma}  \label{lem:abn}
Let $a, b$ be defined as in Lemma~\ref{lem:coprime}. 
Then, for any odd integer $n \ge 1$,  $(a^n - b^n)/(a-b) $ and $(a+b)^2$  are coprime in $D$. 
\end{lemma}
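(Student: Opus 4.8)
The plan is to recognize $(a^n-b^n)/(a-b)$ as the value $P_n(a,b)$ and to realize $(a+b)^2$ as the value of a fixed homogeneous integer polynomial, so that the statement becomes a direct instance of the ``resultant $\pm1\Rightarrow$ coprime'' mechanism encoded in Lemma~\ref{lem:coprime}.

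First I would dispose of the trivial case $n=1$, where $(a^n-b^n)/(a-b)=P_1(a,b)=1$; so assume $n\ge 3$ is odd. As in the proof of Lemma~\ref{lem:PmPn-odd}, $P_n(X,Y)$ is homogeneous of even degree $n-1$ and symmetric in $X$ and $Y$, so pairing the terms $X^iY^j$ and $X^jY^i$ (with $i\le j$, hence $j-i$ even since $i+j=n-1$ is even) and using that $a^{2k}+b^{2k}\in D$ for every $k\ge1$ and $ab\in D$, one gets $P_n(a,b)\in D$. Also $(a+b)^2\in D$ by the standing hypothesis on $a,b$. Thus both quantities lie in $D$, and it remains to show they share no prime divisor.

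Next I would compute the relevant resultant over $\Z$. Since $P_2(X,Y)=(X^2-Y^2)/(X-Y)=X+Y$ and $\gcd(n,2)=1$, Lemma~\ref{lem:Res2} gives $\Res(P_n,P_2)=\pm1$, and by multiplicativity of the resultant in its second argument (immediate from the product formula in Lemma~\ref{lem:Res}) we obtain
$$
\Res\bigl(P_n(X,Y),(X+Y)^2\bigr)=\Res(P_n,P_2)^2=\pm1 .
$$
(Equivalently one could evaluate $\Res(P_n,X+Y)=\prod_{k=1}^{n-1}(\zeta_n^k+1)=\pm P_n(-1,1)=\pm1$, using that $-1$ is not an $n$-th root of unity for odd $n$.) Then I would apply Lemma~\ref{lem:coprime} with the domain $D$, the elements $a,b$ (for which $(a+b)^2$ and $ab$ are coprime in $D$ by hypothesis), and the non-constant homogeneous polynomials $A(X,Y)=P_n(X,Y)$ and $B(X,Y)=(X+Y)^2$ in $\Z[X,Y]$, whose resultant is $\pm1$ and whose values $P_n(a,b)=(a^n-b^n)/(a-b)$ and $(a+b)^2$ both lie in $D$; the conclusion of that lemma is exactly that $(a^n-b^n)/(a-b)$ and $(a+b)^2$ are coprime in $D$.

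I do not expect a genuine obstacle: this is a routine application of the apparatus already assembled in Section~2. The only point needing a moment's care is verifying that $P_n(a,b)$ actually lands in $D$ — this is precisely the symmetry-plus-even-degree observation used repeatedly in Lemmas~\ref{lem:PmPn-odd}--\ref{lem:Pmn}, and it is where the oddness of $n$ is used; without it $P_n(a,b)$ need not lie in $D$ at all.
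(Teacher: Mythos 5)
Your proposal is correct and follows essentially the same route as the paper: verify that $P_n(a,b)$ and $(a+b)^2$ lie in $D$, show the resultant of $P_n(X,Y)$ and $(X+Y)^2$ is $\pm 1$, and invoke Lemma~\ref{lem:coprime}. The only cosmetic difference is that you obtain the resultant from Lemma~\ref{lem:Res2} and multiplicativity (or direct evaluation at roots of unity) rather than by citing the computation in the proof of Lemma~\ref{lem:Pmn}, which amounts to the same calculation.
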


\begin{proof}
Without loss of generality, we fix an odd integer $n \ge 3$. 
As before, both $(a^n - b^n)/(a-b) $ and $(a+b)^2$ are in $D$. 

As in the proof of Lemma~\ref{lem:Pmn}, we deduce that the resultant of the homogeneous polynomials 
$(X^n - Y^n)/(X-Y) $ and $(X+Y)^2$ is equal to $\pm 1$. 
Hence, using Lemma~\ref{lem:coprime}, we obtain that $(a^n - b^n)/(a-b) $ and $(a+b)^2$  are coprime in $D$.  
\end{proof}

\section{Proofs of Theorems~\ref{thm:strong3} and \ref{thm:primitive3}}  

We need to make one more preparation. 

Recall that $p$ is the characteristic of the field $K$. 
As usual, denote by $v_\pi(h)$ the maximal power to which an irreducible polynomial $\pi$ divides $h \in R$. 

Let $M$ be the fraction field of $R$. 
By assumption, $M(\lambda)$ is a field extension over $M$ having degree at most four. Note that $\eta \in M(\lambda)$. 
For any irreducible polynomial $\pi \in R$, as usual $v_\pi$ induces a valuation of $M$. 
It is well-known that the valuation $v_\pi$ in $M$ can be extended to the field $M(\lambda)$; see, for instance, \cite[Theorem 3.1.2]{EP}. 
Without confusion, we still denote by $v_\pi$ the corresponding extension of valuation in $M(\lambda)$. 

\begin{lemma}  \label{lem:vU}
Let $\pi \in R$ be an irreducible polynomial dividing $U_n$ for some $n \ge 3$. 
Then, for any $m \ge 1$ with $p \nmid m$ $($including the case $p=0)$,  
we have $v_\pi (U_{mn}) = v_\pi (U_n)$. 
\end{lemma}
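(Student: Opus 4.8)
The plan is to mimic the classical "lifting the exponent" argument in the valuation-theoretic setting supplied by the extended valuation $v_\pi$ on $M(\lambda)$. Write $q = \lambda/\eta$, so that $q$ is not a root of unity by hypothesis. Since $\pi \mid U_n$ with $n \ge 3$, and $U_n$ is (up to the factor $\lambda-\eta$ or $\lambda^2-\eta^2$) essentially $P_n(\lambda,\eta)$ or $P_n(\lambda,\eta)/(\lambda+\eta)$, we first want to read off that $v_\pi(\lambda^n - \eta^n) > 0$ while $v_\pi(\lambda - \eta) = 0$ in the odd case and $v_\pi(\lambda^2-\eta^2)=0$ in the even case; equivalently $v_\pi(q^n - 1) > 0$ and, crucially, $v_\pi(q^d - 1) = 0$ for the relevant smaller $d$. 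Here the strong divisibility property (Theorem~\ref{thm:strong3}) together with the coprimality lemmas of Section~2 is what guarantees that $\pi$ does not already divide the lower terms built from $\lambda-\eta$, $\lambda+\eta$, so that $v_\pi(q-1)$ (or $v_\pi(q^2-1)$) vanishes.

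The core computation is the identity
\begin{equation*}
\frac{\lambda^{mn} - \eta^{mn}}{\lambda^n - \eta^n} = \sum_{i=0}^{m-1} \lambda^{ni}\eta^{n(m-1-i)} = P_m(\lambda^n, \eta^n),
\end{equation*}
and I want to show $v_\pi$ of the right-hand side is zero whenever $p \nmid m$. Set $\mu = \lambda^n$, $\nu = \eta^n$; then $\pi \mid \mu - \nu$ in the sense $v_\pi(\mu-\nu) > 0$, i.e. $\mu \equiv \nu$ in the residue field of the valuation. Reducing modulo the maximal ideal of the valuation ring, each summand $\mu^i\nu^{m-1-i}$ becomes $\bar\nu^{m-1}$, hence $P_m(\mu,\nu) \equiv m\,\bar\nu^{m-1}$. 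Since $\bar\nu \ne 0$ (as $v_\pi(\eta^n)=0$, which follows from $\lambda\eta$ being coprime to the relevant data and $\pi \nmid U_n$ forcing $\pi \nmid \lambda\eta$) and $m \ne 0$ in the residue field exactly when $p \nmid m$, we get $v_\pi(P_m(\mu,\nu)) = 0$. Multiplying the telescoping identity $U_{mn}/U_n = P_m(\lambda^n,\eta^n)$ in the odd–odd case — and handling the parity bookkeeping ($mn$ vs.\ $n$ even/odd, the extra $\lambda^2-\eta^2$ versus $\lambda-\eta$ denominators, using Lemmas~\ref{lem:PmPn-mix}, \ref{lem:Pmn}, \ref{lem:abn} to control the factor $(\lambda+\eta)$) — then yields $v_\pi(U_{mn}) = v_\pi(U_n)$.

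The main obstacle I anticipate is precisely this parity bookkeeping: the definition of $U_n$ switches denominators according to the parity of the index, so $U_{mn}/U_n$ is not simply $P_m(\lambda^n,\eta^n)$ in all cases — when $n$ is odd and $mn$ is even (i.e.\ $m$ even, excluded if $p=2$ but allowed for odd $p$) there is an extra factor of $\lambda^n+\eta^n$ over $\lambda+\eta$, and one must check $v_\pi$ of that correction term also vanishes. This is exactly where Lemma~\ref{lem:Pmn} (coprimality of $P_m(a^n,b^n)$ with $(a^n+b^n)/(a+b)$) and Lemma~\ref{lem:abn} are designed to be used: they certify that the auxiliary factors introduced by the denominator switch are coprime to $U_n$, hence invisible to $v_\pi$. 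A secondary point to be careful about is that $v_\pi$ on $M(\lambda)$ may be non-trivially ramified over $v_\pi$ on $M$, so all the "$v_\pi(\cdot)=0$" conclusions should be phrased as non-vanishing in the residue field (which is insensitive to normalization), and only at the very end, after everything is expressed back in terms of elements of $R$, does one invoke that $v_\pi$ restricted to $R$ is the usual multiplicity. Once the residue-field reduction is set up correctly, the computation itself is the routine telescoping shown above.
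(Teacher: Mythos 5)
Your proposal is correct in its essentials and follows the same route as the paper: the paper expands $\lambda^{mn}=\bigl(\eta^n+(\lambda-\eta)U_n\bigr)^m$ binomially to obtain $U_{mn}/U_n\equiv m\,\eta^{n(m-1)}$ modulo $\pi$, which is exactly your residue-field computation of $P_m(\lambda^n,\eta^n)$, and it likewise derives $v_\pi(\eta)=v_\pi(\lambda)=0$ from the coprimality of $(\lambda+\eta)^2$ and $\lambda\eta$ and invokes Lemma~\ref{lem:abn} to get $v_\pi(\lambda+\eta)=0$ in the only case (odd $n$, even $m$) where the denominator switch introduces an extra factor. One caveat: your opening claim that $v_\pi(\lambda-\eta)=0$ (resp.\ $v_\pi(\lambda^2-\eta^2)=0$) is neither needed nor true in general --- for instance in characteristic $3$ with $(\lambda+\eta)^2=X^2$ and $\lambda\eta=1$ one has $U_3=(\lambda-\eta)^2=X^2-1$, and the lemma still applies to $n=3$ since only $m$ is required to be prime to $p$ --- but this does no harm because the denominators cancel in the ratio $U_{mn}/U_n$, so your core computation never uses it. (Also, ``$\pi\nmid U_n$ forcing $\pi\nmid\lambda\eta$'' should read ``$\pi\mid U_n$ forcing $\pi\nmid\lambda\eta$''.)
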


\begin{proof}
First, since $\lambda, \eta$ are both integral over the ring $R$, 
we have that $v_\pi(\lambda) \ge 0$ and  $v_\pi(\eta) \ge 0$. 

Suppose that $v_\pi(\eta) > 0$. 
Note that we have either $\lambda^n = \eta^n + (\lambda - \eta)U_n$, or $\lambda^n = \eta^n + (\lambda^2 - \eta^2)U_n$. 
Then, since $v_\pi(\eta) > 0$ and $v_\pi(U_n) > 0$, we have $v_\pi(\lambda^n) > 0$. 
So, $v_\pi (\lambda) > 0$. 
Thus, 
$$
v_\pi(\lambda + \eta) >0, \qquad v_\pi(\lambda\eta) > 0, 
$$
which contradicts the assumption that $(\lambda + \eta)^2$ and $\lambda\eta$ are coprime in $R$. 
Hence, we must have $v_\pi(\eta) = 0$. 
Similarly, we must have $v_\pi(\lambda) = 0$. 

Assume that $n$ is odd. 
Then, $U_n = (\lambda^n - \eta^n)/(\lambda - \eta)$. 
 So, we have 
$$
\lambda^n = \eta^n +  (\lambda - \eta)U_n. 
$$
Then, we obtain
$$
\lambda^{mn} = \big( \eta^n + (\lambda - \eta)U_n \big)^m 
= \eta^{mn} + \sum_{i=1}^{m} \binom{m}{i} (\lambda - \eta)^i U_n^i  \eta^{n(m-i)}. 
$$
So 
$$
\frac{\lambda^{mn} - \eta^{mn}}{\lambda - \eta} = m\eta^{n(m-1)} U_n + \sum_{i=2}^{m} \binom{m}{i} (\lambda - \eta)^{i-1} \eta^{n(m-i)}  U_n^i.
$$
Hence, we obtain that for odd $m$
$$
U_{mn} = m\eta^{n(m-1)}U_n + \sum_{i=2}^{m}\binom{m}{i} (\lambda - \eta)^{i-1}\eta^{n(m-i)} U_n^{i},
$$
and for even $m$ 
$$
(\lambda + \eta)U_{mn} = m\eta^{n(m-1)}U_n + \sum_{i=2}^{m}\binom{m}{i} (\lambda - \eta)^{i-1}\eta^{n(m-i)} U_n^{i}.
$$
We also note that since $n$ is odd and $v_\pi (U_n) > 0$, by Lemma~\ref{lem:abn} we have $v_\pi (\lambda + \eta) = 0$. 
Then, the desired result follows.

Finally, assume that $n$ is even. 
Then, as the above, for any integer $m \ge 1$ we obtain 
$$
U_{mn} = m\eta^{n(m-1)}U_n + \sum_{i=2}^{m}\binom{m}{i} (\lambda^2 - \eta^2)^{i-1}\eta^{n(m-i)} U_n^{i}.
$$
The desired result now follows.
\end{proof}

Now, we are ready to prove the theorems. 

\begin{proof}[Proof of Theorem~\ref{thm:strong3}]
Let $d = \gcd(m,n)$. 

First, we assume that both $m$ and $n$ are even. Then, $d$ is also even. 
By definition, we obtain 
$$
U_m = U_d P_{m/d}(\lambda^d, \eta^d), \quad U_n = U_d P_{n/d}(\lambda^d, \eta^d). 
$$
By assumption, it is easy to see that $\lambda^d + \eta^d$ and $\lambda^d \eta^d$ are coprime in $R$ 
(as in the last paragraph of the proof of Lemma~\ref{lem:coprime}).   
Hence, by Lemma~\ref{lem:PmPn2}, we know that $P_{m/d}(\lambda^d, \eta^d)$ and $P_{n/d}(\lambda^d, \eta^d)$ are coprime in $R$, 
and so we have $\gcd(U_m, U_n) = U_d$ in this case.

Now, we assume that both $m$ and $n$ are odd. Then, $d$ is also odd. 
By definition, we have 
$$
U_m = U_d P_{m/d}(\lambda^d, \eta^d), \quad U_n = U_d P_{n/d}(\lambda^d, \eta^d). 
$$
We also note that $(\lambda^d + \eta^d)^2$ and $\lambda^d \eta^d$ are coprime in $R$.  
Then, by Lemma~\ref{lem:PmPn-odd} we know that $P_{m/d}(\lambda^d, \eta^d)$ and $P_{n/d}(\lambda^d, \eta^d)$ are coprime in $R$, 
and so we have $\gcd(U_m, U_n) = U_d$.

Finally, when $m$ and $n$ do not have the same parity, without loss of generality, we assume that  $m$ is odd and $n$ is even. 
Then, $d$ is odd. By definition, we have 
$$
U_m = U_d P_{m/d}(\lambda^d, \eta^d),
$$
and 
$$
 U_n = U_d \cdot \frac{P_{n/d}(\lambda^d, \eta^d)}{\lambda^d + \eta^d} \cdot \frac{\lambda^d + \eta^d}{\lambda + \eta}. 
$$
Then, by Lemma~\ref{lem:PmPn-mix} we know that $P_{m/d}(\lambda^d, \eta^d)$ and $P_{n/d}(\lambda^d, \eta^d)/(\lambda^d + \eta^d)$ are coprime in $R$. 
Besides, by Lemma~\ref{lem:Pmn} we obtain that $P_{m/d}(\lambda^d, \eta^d)$ and $(\lambda^d + \eta^d)/(\lambda + \eta)$ are coprime in $R$. 
Hence, we have $\gcd(U_m, U_n) = U_d$.
This completes the proof. 
\end{proof}

\begin{proof}[Proof of Theorem~\ref{thm:primitive3}] 
As in \cite{Ward}, we define the sequence $(Q_n)_{n \ge 1}$ of polynomials by $Q_1 = 1, Q_2 = 1$, and 
$$
Q_n(X,Y) = \Phi_n(X,Y), \quad n = 3, 4, \ldots. 
$$
Then, it is easy to see that for any integer $n \ge 1$ we have 
$$
U_n = \prod_{d \mid n} Q_d(\lambda, \eta). 
$$
By the M{\"o}bius inversion, we have 
$$
Q_n(\lambda, \eta) = \prod_{d\mid n} U_d^{\mu(n/d)}. 
$$
So, for any irreducible polynomial $\pi$ in $R$ we have 
$$
v_\pi(Q_n(\lambda, \eta)) = \sum_{d \mid n} \mu(n/d) v_\pi(U_d). 
$$

Now, assume the characteristic $p > 0$. 
suppose that $\pi$ is a prime divisor of $U_n$ which is not primitive, where $p \nmid n$. 
Let $m$ be the minimal positive integer such that $\pi \mid U_m$. Automatically, $p \nmid m$. 
Then, by Theorem~\ref{thm:strong3} we have  $m \mid n$, and by Lemma~\ref{lem:vU}, for any positive integer $k$ with $p \nmid k$ 
$$
v_\pi(U_{mk}) = v_\pi(U_m).
$$
Hence, if $m < n$, noticing $p \nmid n$ we obtain 
\begin{align*}
v_\pi(Q_n(\lambda, \eta)) & = \sum_{d \mid n/m} \mu(n/(dm)) v_\pi(Q_{dm}) \\
& = \sum_{d \mid n/m} \mu(n/(dm)) v_\pi(Q_m) \\
& = v_\pi(Q_m) \sum_{d \mid n/m} \mu(n/(dm)) = 0.
\end{align*}
So, any non-primitive prime divisor of $U_n$ (in the sequence $U^\prime$) does not divide $Q_n(\lambda, \eta)$. 
It is easy to see that when $n > 2$, $Q_n(\lambda, \eta) = \Phi_n(\lambda, \eta)$ is non-constant  
(because at least one of $\lambda$ and $\eta$ is transcendental over $K$),  
and so $Q_n(\lambda, \eta) $ has a prime divisor in $R$. 
Thus, when $n > 2$, any prime divisor of $Q_n(\lambda, \eta)$ is primitive, 
and so each term in the sequence $U^\prime$ beyond the second has a primitive prime divisor. 

The proof for the case $p = 0$ follows exactly the same way.  
\end{proof}

\begin{remark}   \label{rem:Lehmer}
In the proof of Theorem~\ref{thm:primitive3}, we obtain more: the \textit{primitive part} 
(that is, the product of all the primitive prime divisors to their respective powers) 
of $U_n$ is $Q_n(\lambda, \eta) = \Phi_n(\lambda,\eta)$, where $n \ge 3$, and $p \nmid n$ if $p > 0$. 
\end{remark}

\section{Proofs of Theorems~\ref{thm:strong2} and \ref{thm:primitive2}}

The proofs follow easily from Theorems~\ref{thm:strong3} and \ref{thm:primitive3}. 

\begin{proof}[Proof of Theorem~\ref{thm:strong2}]
Fix positive integers $m, n$ with $d= \gcd(m,n)$. 
If either both $m,n$ are odd, or both $m,n$ are even, 
it follows directly from Theorem~\ref{thm:strong3} that $\gcd(L_m, L_n) = L_d$ (setting $\lambda = \alpha, \eta = \beta$). 

Now, without loss of generality, assume that $m$ is even and $n$ is odd. By Theorem~\ref{thm:strong3}, we have 
$$
\gcd\Big(\frac{\alpha^m - \beta^m}{\alpha^2 - \beta^2}, \frac{\alpha^n - \beta^n}{\alpha - \beta} \Big) 
= \frac{\alpha^d - \beta^d}{\alpha - \beta}. 
$$
Using Lemma~\ref{lem:abn} we know that $(\alpha^n - \beta^n)/(\alpha - \beta)$ and $\alpha + \beta$ are coprime in $R$. 
Hence, we obtain $\gcd(L_m, L_n) = L_d$. 
This completes the proof. 
\end{proof}

\begin{proof}[Proof of Theorem~\ref{thm:primitive2}]
Assume that the characteristic $p = 0$.  
First, by Lemma~\ref{lem:abn}, we have that for any odd $n \ge 3$, $\Phi_n(\alpha,\beta)$ and $\alpha + \beta$ are coprime in $R$. 

Now, fix an even integer $n \ge 4$. 
Suppose that there exists an irreducible polynomial, say $\pi$, in $R$ dividing both $\Phi_n(\alpha, \beta)$ and $\Phi_2(\alpha,\beta) = \alpha + \beta$.  
This means that the polynomial $X^n - Y^n$, defined over the fraction field of the ring $R$ (mod $\pi$), has a multiple root (that is, $(\alpha, \beta)$). 
However, this fraction field has characteristic zero (because it contains the field $K$), which implies that $X^n - Y^n$ is in fact a simple polynomial. 
Hence, this leads to a contradiction, and so $\Phi_n(\alpha, \beta)$ and $\alpha + \beta$ are coprime in $R$. 

Therefore, by constructions we directly obtain from Remark~\ref{rem:Lehmer} that 
the primitive part of $L_n$ is $\Phi_n(\alpha,\beta)$, where $n \ge 3$. 

Finally, if the characteristic $p > 0$, then by contruction the above arguments still work
(because in the sequence $L^\prime$ we have deleted those terms $L_n$ with $p \mid n$).
\end{proof}

\begin{remark}   \label{rem:Lucas}
In the proof of Theorem~\ref{thm:primitive2}, we obtain more: the primitive part
of $L_n$ is $\Phi_n(\alpha,\beta)$, where $n \ge 3$, and $p \nmid n$ if $p > 0$. 
\end{remark}

\section{Proofs of Theorems~\ref{thm:strong1} and \ref{thm:primitive1}}

Clearly, Theorem~\ref{thm:strong1} follows directly from Theorem~\ref{thm:strong2}. 

\begin{proof}[Proof of Theorem~\ref{thm:primitive1}]
Assume that the characteristic $p = 0$.  
Fix an integer $n \ge 3$. 
Taking $\alpha = f$ and $\beta = g$ in Theorem~\ref{thm:primitive2} and noticing Remark~\ref{rem:Lucas}, 
we know that the primitive part of the term $ (f^n - g^n) / (f-g)$ is $\Phi_n(f,g)$. 
As the above, we obtain that $\Phi_n(f,g)$ and $f-g$ are coprime in $R$. 
Hence, the primitive part of the term $F_n = f^n - g^n$ is $\Phi_n(f,g)$. 

Finally, if the characteristic $p > 0$, then by contruction the above arguments still work 
(because in the sequence $F^\prime$ we have deleted those terms $F_n$ with $p \mid n$).
\end{proof}

\begin{remark}
In the proof of Theorem~\ref{thm:primitive1}, we obtain more: the primitive part
of $F_n$ is $\Phi_n(f,g)$, where $n \ge 3$, and $p \nmid n$ if $p > 0$. 
\end{remark}

\section{Comments}

In this section, we make some remarks about extending our results to unique factorization domains. 

Note that all the lemmas used in proving the strong divisibility property are valid for any unique factorization domain. 
So, we have the following result. 

\begin{theorem}
The strong divisibility properties in Theorems~\ref{thm:strong3}, \ref{thm:strong2} and \ref{thm:strong1} still hold 
when we replace the ring $R$ by a unique factorization domain $D$. 
\end{theorem}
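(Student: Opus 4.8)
The plan is to observe that the proofs of Theorems~\ref{thm:strong3}, \ref{thm:strong2} and \ref{thm:strong1} given in Sections~3--5 never use any feature of $R = K[X_1,\ldots,X_r]$ beyond the fact that it is a unique factorization domain, so they transfer essentially verbatim. First I would fix the general setting: let $D$ be a UFD with fraction field $M$, let $\lambda,\eta$ (resp. $\alpha,\beta$, resp. $f,g$) be non-zero elements algebraic over $D$ with $\lambda/\eta$ (resp. $\alpha/\beta$, $f/g$) not a root of unity, and replace the phrase ``coprime polynomials in $R$'' by ``coprime elements in $D$'' throughout the hypotheses. The auxiliary condition ``not both in $K$'' may simply be dropped here, since in the original argument it was invoked only in the primitive-divisor proofs, to guarantee that $\Phi_n(\lambda,\eta)$ is non-constant; it plays no role in strong divisibility.

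Next I would run through the chain of lemmas in Section~2 and record that each is already stated and proved for an arbitrary UFD $D$. Lemmas~\ref{lem:Res} and \ref{lem:Res2} are purely statements about resultants of polynomials over $\Z$ and over a field, hence characteristic-free. Lemma~\ref{lem:coprime} uses only that $D$ is integrally closed in its fraction field, which holds for every UFD, together with the binomial identity for $(a^2+b^2)^k$; Lemmas~\ref{lem:PmPn2}, \ref{lem:PmPn-odd}, \ref{lem:PmPn-mix}, \ref{lem:Pmn} and \ref{lem:abn} are then deduced from these via symmetry arguments showing that the relevant expressions, e.g. $P_m(a,b)$ or $P_n(a,b)/(a+b)$, actually lie in $D$, and via resultant computations over $\Z$. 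So the full toolkit is available over $D$ with no change.

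Then I would reproduce the three strong-divisibility proofs. For Theorem~\ref{thm:strong3} one splits into the cases $m,n$ both even, both odd, or of opposite parity, and in each case uses: the factorization identities $U_m = U_d\,P_{m/d}(\lambda^d,\eta^d)$ and their variants, which are polynomial identities in $\lambda,\eta$ valid in any commutative ring; the observation that coprimality of $\lambda+\eta$ and $\lambda\eta$ (resp. of $(\lambda+\eta)^2$ and $\lambda\eta$) forces coprimality of $\lambda^d+\eta^d$ and $\lambda^d\eta^d$, by the argument reproduced from the last paragraph of the proof of Lemma~\ref{lem:coprime}, which again uses only that $D$ is a UFD; and Lemmas~\ref{lem:PmPn2}, \ref{lem:PmPn-odd}, \ref{lem:PmPn-mix}, \ref{lem:Pmn}. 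Theorems~\ref{thm:strong2} and \ref{thm:strong1} then follow from Theorem~\ref{thm:strong3} exactly as in Sections~4 and 5, using Lemma~\ref{lem:abn} to peel off the factors $\alpha+\beta$ (resp. $f-g$). I do not anticipate a genuine obstacle: the only thing requiring care is a bookkeeping check that no step silently invokes non-constancy of a cyclotomic value or the characteristic of $K$ --- both of which enter only the primitive-divisor theorems --- and once that is confirmed the result follows.
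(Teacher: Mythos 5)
Your proposal is correct and takes essentially the same route as the paper, whose entire proof is the one-line observation that all the lemmas of Section~2 are already stated and proved for an arbitrary unique factorization domain $D$, so the arguments for Theorems~\ref{thm:strong3}, \ref{thm:strong2} and \ref{thm:strong1} carry over unchanged. Your more detailed bookkeeping (including the remark that the ``not both in $K$'' hypothesis is only needed for the primitive-divisor statements) simply makes explicit what the paper leaves implicit.
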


In order to extend fully all our results on primitive divisors to a unique factorization domain $D$, we need to assure two properties. 
One is about the valuation similar as in Lemma~\ref{lem:vU}. 
The other is to assure that $\Phi_n(f,g), \Phi_n(\alpha,\beta)$ and $\Phi_n(\lambda, \eta)$ are all non-zero and non-unit whenever $n \ge 3$. 

If $D$ contains a field, then any integer as an element in $D$ is either zero or a unit, 
and so the valuation result holds in this case by following the same arguments as Lemma~\ref{lem:vU}. 
Hence, in this case, if one can show that $\Phi_n(f,g)$ is non-unit whenever $n > n_0$ for some integer $n_0$, 
then one in fact prove the result in Theorem~\ref{thm:primitive1} by replacing ``beyond the second" with ``beyond the $n_0$-th". 
Similar things apply to Theorems~\ref{thm:primitive3} and \ref{thm:primitive2}. 

We present an example here. 
Let $D=K[[X]]$ be the formal power series ring defined over a field $K$ in one variable $X$. 
Then, an element $\sum_{n=0}^{\infty}a_n X^n$ in $D$ is a unit  if and only if $a_0 \ne 0$.  
Let $f$ and $g$ be non-zero, non-unit and coprime in $D$ such that $f/g$ is not a root of unity. 
Then, $\Phi_n(f,g)$ is non-zero and non-unit for any $n \ge 1$, 
and so Theorem~\ref{thm:primitive1} holds in this case. 
In addition, if let $f$ be non-unit and $g$ a unit in $D$, then $\Phi_n(f,g)$ is a unit for any $n \ge 1$,

\section*{Acknowledgement}
The author was partly supported by the Australian Research Council Grant DE190100888.

\end{document}